\newtheorem{theorem}{Theorem}
\newtheorem{lemma}[theorem]{Lemma}
\newtheorem{corollary}[theorem]{Corollary}
\newtheorem{conjecture}[theorem]{Conjecture}
\newtheorem*{maintheorem*}{Main Theorem}
\newtheorem*{classificationtheorem*}{Classification Theorem}
\newtheorem*{technicallemma1*}{Technical Lemma 1}
\newtheorem*{technicallemma2*}{Technical Lemma 2}
\theoremstyle{definition}
\theoremstyle{remark}
\newtheorem{remark}[theorem]{Remark}
\newcommand{\Gm}{\mathbb{G}_m}
\newcommand{\Ga}{\mathbb{G}_a}
\newcommand{\F}{\mathbb{F}}
\renewcommand{\P}{\mathbb{P}}
\newcommand{\Z}{\mathbb{Z}}
\newcommand{\C}{\mathbb{C}}
\newcommand{\ol}[1]{\overline{#1}}
\renewcommand{\O}{\mathcal{O}}
\newcommand{\Q}{\mathbb{Q}}
\newcommand{\R}{\mathbb{R}}
\sloppy\pagestyle{plain}
\title{K-stable Fano threefolds of rank $2$ and degree $28$}
\author{Joseph Malbon}
\thanks{Throughout this paper, all varieties are assumed to be projective and defined over $\mathbb{C}$.}
\address{\emph{Joseph Malbon}
\newline
\textnormal{University of Edinburgh, Edinburgh, United Kingdom}
\newline
\textnormal{\texttt{j.malbon@sms.ed.ac.uk}}}
\begin{document}

\begin{abstract}
    Moduli spaces of Fano varieties have historically been difficult to construct. However, recent work has shown that smooth K-polystable Fano varieties of fixed dimension and volume can be parametrised by a quasi-projective moduli space. In this paper, we prove that all smooth Fano threefolds with Picard rank 2 and degree 28 are K-polystable, except for some explicit cases which we describe.
\end{abstract}

\maketitle

\section{Introduction}
\label{section:intro}


Fano varieties, those which have ample anti-canonical class, are a fundamental class of varieties in algebraic geometry. The minimal model program predicts that any smooth variety is birational to either a canonically polarised variety, or to a variety fibred by either Fano varieties or varieties with numerically trivial canonical class. Thus, they form one of three classes of ``building blocks" from which all varieties may be constructed. Many varieties which are defined by polynomials of low degree are Fano, such as hypersurfaces in $\P^n$ of degree $\leq n$, or more generally complete intersections in $\P^n$ of hypersurfaces $H_1,\ldots,H_r$ such that $\sum_i\mathrm{deg}(H_i)\leq n$.  \\

It was proven by Koll\'ar, Miyaoka and Mori in \cite{KMM} that smooth Fano varieties of given dimension arise in finitely many deformation families. Namely, for each natural number $n$, there are finitely many smooth, projective morphisms $\mathcal{X}_i\to T_i$ of quasi-projective varieties such that whenever $X$ is a smooth $n$-dimensional Fano variety, then $X$ is isomorphic to a fibre of $\mathcal{X}_i\to T_i$, for some $i$.\\ 

For example, the projective line $\P^1$ is the only Fano curve. The Fano, or \textit{del Pezzo}, surfaces are $\P^1\times\P^1$, and $\P^2$ blown up in at most 8 points in sufficiently general position, and the members of each of these 10 families are parametrised by an irreducible variety.  Fano threefolds were classified by Iskovskikh \cite{Iskovskikh}, Mori and Mukai \cite{MoriMukai}, into $105$ deformation families labeled as \textnumero 1.1, \textnumero 1.2, \textnumero 1.3,   $\ldots$, \textnumero 10.1, and threefolds in each of these $105$ families are parametrised by an irreducible variety.  \\ 

Historically the construction of moduli spaces for each deformation family seemed out of reach, due in part to the fact that Fano varieties often have complicated degenerations, and so separateness of any sort of space is not achievable. However, it  was shown in \cite{BX} that degenerations of Fano varieties which are K-polystable are indeed unique. Furthermore, it has been shown recently in \cite{Jiang, BLX, BX}, and in \cite{ABHLX}, that the moduli stack of families of K-semistable $\Q$-Fano varieties of given dimension and volume admits a good moduli space (in the sense of Alper, \cite{Alper}) which is a separated algebraic space whose closed points are in bijection with the \textit{K-polystable} varieties. Moreover, it was shown in \cite{LXZ} that this algebraic space is in fact a projective variety. Thus, a natural question to ask is which elements in each deformation family, if any, are K-semistable and K-polystable. We refer the reader to the survey \cite{Xu} and the book \cite{Book}. \\ 

This question has been answered in many cases, for example in \cite{AbbanZhuangSeshadri,Book,BelousovLoginov,CheltsovDenisovaFujita,CheltsovFujitaKishimotoOkada,CFKP,CheltsovPark,Denisova,GGV,Liu,XuLiu}. The main goal of this paper is to prove a conjecture stated in \cite[\S~7.3]{Book},
which describes all K-polystable smooth Fano threefolds in the~deformation family \textnumero 2.21. \\

Let us now describe this family. Let $Q\subset\P^4$ be a smooth quadric threefold and let $C_4$ be a rational normal quartic curve $C_4\subset Q$. Let $\pi\colon X\to Q$ be the blow-up of $Q$ in $C_4$. Family \textnumero2.21 consists of all threefolds obtained this way. The threefold $X$ has $\mathrm{Pic}(X)\cong\Z^2$ and volume $(-K_X)^3=28$, and moreover every smooth Fano threefold with these invariants is a member of family \textnumero2.21. \\ 

One easily sees that the map $H^0(\P^4,\O_{\P^4}(2))\to H^0(C_4,\O_{C_4}(2))$ is surjective, so from the ideal sheaf exact sequence for $C_4\subset\P^4$ and the isomorphism $\O_{C_4}(2)\cong\O_{\P^1}(8)$, we have that the vector space $H^0(\P^4, \mathcal{I}_{C_4/\P^4}(2))$ has dimension $6=15-9$. Thus, quadric threefolds that contain $C_4$ are parametrised by $\P^5$. The natural $\mathrm{SL}_2(\C)$-action on $C_4$ lifts to $\P^4$ in such a way that $C_4$ is invariant. Then $\mathrm{SL}_2(\C)$ acts naturally on this parameter space. Now, we are ready to present the conjecture stated in~\mbox{\cite[\S~7.3]{Book}}.
\begin{conjecture}[{\cite[\S~7.3]{Book}}]
\label{conjecture:main}
The smooth Fano threefold $X$ is K-polystable if and only if the quadric $Q$ is GIT-polystable with respect to the $\mathrm{SL}_2(\C)$-action.
\end{conjecture}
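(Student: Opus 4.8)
The plan is to prove the two implications of Conjecture~\ref{conjecture:main} separately, disposing of the five exceptional quadrics of Remark~\ref{remark:GIT} by direct arguments and of the generic, GIT-stable quadric by a lower bound on the stability threshold $\delta(X)$. For the implication ``$X$ K-polystable $\Rightarrow Q$ GIT-polystable'', Remark~\ref{remark:GIT} reduces us to showing that $X$ is not K-polystable when $Q$ is one of the three quadrics $\{f_0+f_5=0\}$, $\{f_0+3f_2+\lambda f_5=0\}$ or $\{f_1+f_5=0\}$. For $\{f_0+f_5=0\}$ the group $\mathrm{Aut}^0(X)\cong\Ga$ is not reductive, which is exactly the obstruction already recorded in Remark~\ref{remark:Aut-infinite}. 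For the other two, $\mathrm{Aut}(X)$ is finite but the corresponding point $[s]\in\mathbb{P}^5$ is strictly GIT-semistable, so there is a one-parameter subgroup $\lambda_t\subset\mathrm{SL}_2(\mathbb{C})$ along which the Hilbert--Mumford weight of $[s]$ vanishes while $\lim_{t\to 0}\lambda_t\cdot[s]=[s']$ is one of the two GIT-polystable points. Since $C_4$ is $\mathrm{SL}_2(\mathbb{C})$-invariant, using $\lambda_t$ to degenerate the quadric and then blowing up $C_4\times\mathbb{A}^1$ produces a non-product test configuration for $(X,-K_X)$ whose central fibre is the smooth threefold attached to the (smooth) quadric cut out by $[s']$; one checks that its Donaldson--Futaki invariant is a positive multiple of the vanishing Mumford weight of $[s]$, hence is $0$, and a non-product test configuration with vanishing Donaldson--Futaki invariant already obstructs K-polystability.

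For the implication ``$Q$ GIT-polystable $\Rightarrow X$ K-polystable'', the two GIT-polystable quadrics with positive-dimensional symmetry, $\{f_5=0\}$ and $\{3f_2+\lambda f_5=0\}$, are covered by cases (1)--(2) of Remark~\ref{remark:Aut-infinite}, hence by \cite{Book}, so it remains to treat a GIT-stable $Q$. Then $\mathrm{Aut}(X)$ is finite, K-polystability is equivalent to K-stability, and the latter is equivalent to $\delta(X)>1$; I would establish this last inequality by the Abban--Zhuang method, using flags adapted to the two blow-downs $\pi\colon X\to Q$ and $\pi'\colon X\to Q'$. For the exceptional divisors $E$ and $E'$ the ratio $A_X/S_X$ is a fixed number strictly larger than $1$, independent of $Q$, since the relevant intersection numbers and pseudoeffective thresholds are constant across the family; the $Q$-dependence is concentrated in the proper transforms of hyperplane sections of $Q$ through $C_4$, in auxiliary surfaces lying over $C_4$, and above all in the finitely many secant lines of $C_4$ that lie on $Q$ --- those contracted by $\chi$, lying over $V_3\cap Q=\pi(E')$. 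For each such centre I reduce, via the Abban--Zhuang refinement first to curves and then to points, to estimating local intersection numbers and multiplicities along $C_4$, along its secants, and along its osculating flags, and to verifying $\beta(F)=A_X(F)-S_X(F)>0$.

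The hard part is the uniformity of these estimates over the two-dimensional GIT-stable locus: as $[s]$ tends to any of the three strictly semistable orbits, the worst divisor $F$ forces $\beta(F)\to 0$, so the argument must extract from the Hilbert--Mumford criterion precisely the quantitative input --- a dictionary converting GIT-stability of $[s]\in\mathbb{P}^5$ into a bound on how degenerately a hyperplane section, or a secant line of $C_4$, can sit on $Q$ --- needed to keep $\delta(X)>1$, and it must confirm that no divisor over $X$ outside the list above can violate $\beta>0$. As an organizing principle and a cross-check I would run this in parallel with the moduli comparison: the Donaldson--Futaki-versus-Mumford-weight identity of the first step identifies the CM line bundle on $\mathbb{P}^5$ with a positive multiple of $\mathcal{O}(1)$, so, given the single K-polystable member $\{f_5=0\}$ and the openness of K-semistability, the GIT quotient $\mathbb{P}^5/\!\!/\mathrm{SL}_2(\mathbb{C})$ and the K-moduli space of family \textnumero 2.21 are matched in a polarization-compatible way, which pins down the behaviour on each GIT stratum that the divisorial estimates must reproduce.
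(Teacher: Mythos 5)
Your proposal is a program rather than a proof, and it diverges from the paper at exactly the points where the real work lies. On the direction ``$X$ K-polystable $\Rightarrow$ $Q$ GIT-polystable'': the non-reductivity argument for $\{f_0+f_5=0\}$ is fine, but for $\{f_0+3f_2+\lambda f_5=0\}$ and $\{f_1+f_5=0\}$ your claim that the Donaldson--Futaki invariant of the induced test configuration ``is a positive multiple of the vanishing Mumford weight'' is precisely the nontrivial CM-line-bundle identification, and you do not prove it; without it the conclusion $\mathrm{DF}=0$ is unsupported. The paper avoids any weight computation (Lemma~\ref{lemma:strict k-semistab}): it writes down the explicit $\Gm$-degeneration of $Q$ fixing $C_4$, blows up $C_4\times\C$ to degenerate $X$ isotrivially to the K-polystable threefold $X_0$ attached to $\{3f_2+\lambda f_5=0\}$, deduces K-semistability of $X$ from openness, and rules out K-polystability because the uniqueness of K-polystable degenerations would force $X\cong X_0$, contradicting the comparison of automorphism groups in Remark~\ref{remark:Aut-infinite}.

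The more serious gap is in ``$Q$ GIT-(poly)stable $\Rightarrow$ $X$ K-polystable''. You propose to prove $\delta(X)>1$ for every GIT-stable $Q$ by Abban--Zhuang estimates at all centres, including points of $E\cup E'$, fibres, and the secant lines on $Q$, and you yourself flag that the required uniformity over the GIT-stable locus (``a dictionary converting GIT-stability of $[s]$ into quantitative bounds'') is the hard part --- but that dictionary is never supplied, so the implication is not established. The paper takes a genuinely different route that makes this problem disappear: the Classification Theorem shows every GIT-stable or GIT-polystable $Q$ is $\mathrm{SL}_2(\C)$-equivalent to a normal form (cases (1)--(2)) invariant under the involutions $\iota$ and $\tau$, so $\mathrm{Aut}(Q,C_4)\supseteq\mumu_2^2$; the Main Theorem then only requires $\beta(F)>0$ for \emph{$G$-invariant} prime divisors $F$ over $X$ (equivariant valuative criterion). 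Since $\mumu_2^2$ acts faithfully on $C_4\cong\P^1$ and cannot fix a point, $G$-invariant centres that are points of $E\cup E'$ or fibres of $E\to C_4$, $E'\to C_4'$ simply do not occur, and the remaining centres are handled by two computations (Technical Theorems~\ref{theorem:technical1} and~\ref{theorem:technical2}) whose inputs --- the intersection theory of $H,E,E'$ and the degree-$4$ del Pezzo geometry of a general hyperplane section --- are independent of the particular $Q$. In short, where you need hard, $Q$-dependent, uniform estimates (and a CM/GIT comparison as a cross-check), the paper substitutes a normal-form classification plus symmetry, which is the key idea missing from your argument.
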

\begin{remark}
\label{remark:GIT}
Let us now describe what is known about this problem. Firstly, the GIT stability for the action of $\mathrm{SL}_2(\C)$ on the parameter space is well-understood (for example, see \cite[Corollary A.58]{Book}). To describe this, let us fix a coordinate system: up to projective transformation, $C_4$ is the image of the degree 4 Veronese embedding of $\P^1$:
$$[u:v] \mapsto[u^4:u^3v:u^2v^2:uv^3:v^4].$$
Then the vector space $H^0(\P^4,\mathcal{I}_{C_4/\P^4}(2))$ is generated by the following quadratic forms:
\begin{align*}
f_0&=x_3^2-x_2x_4,\\
f_1&=x_2x_3-x_1x_4,\\
f_2&=x_2^2-x_0x_4,\\
f_3&=x_1x_2-x_0x_3,\\
f_4&=x_1^2-x_0x_2,\\
f_5&=3x_2^2-4x_1x_3+x_0x_4, 
\end{align*}
where $x_0$, $x_1$, $x_2$, $x_3$, $x_4$ are homogeneous coordinates on $\mathbb{P}^4$. \\ 

We have that $\{f_5=0\}$ is the unique $\text{SL}_2(\mathbb{C})$-invariant quadric threefold in $\mathbb{P}^4$. Thus, the $\text{SL}_2(\C)$-representation $\P^5\cong\P\big(H^0(\mathcal{I}_{C_4/\P^4}(2))\big)$ splits as a direct sum: $\P(V\oplus\C)$, where $V$ is the five-dimensional vector space spanned by $f_0,f_1,f_2,f_3,f_4$, and $\text{SL}_2(\mathbb{C})$ acts trivially on the summand $\C$ spanned by $f_5$. Then the GIT-stability for this representation is as follows: every point in $\mathbb{P}^5$ is stable, except for
the following five explicit cases, up to the action of $\text{SL}_2(\C)$:
\begin{center}
\renewcommand\arraystretch{1.7}
\begin{tabular}{|c|c|c|}
\hline
\quad Quadric threefold $Q\subset\mathbb{P}^4$ \quad  & \quad GIT-semistability \quad & \quad  $\mathrm{Aut}(Q,C_4)$\quad  \\
\hline
\hline
$\{f_5=0\}$ & GIT-polystable & $\mathrm{PGL}_2(\mathbb{C})$ \\
\hline
$\{\lambda f_2+f_5=0\}$ for $\lambda\in\mathbb{C}\setminus\{0,1,-3\}$& GIT-polystable & $\Gm\rtimes\Z_2$ \\
\hline
$\{f_0+f_5=0\}$ & strictly GIT-semistable & $\Ga\rtimes\Z_2$ \\
\hline
$\{f_0+\lambda f_2+ f_5=0\}$ for $\lambda\in\mathbb{C}\setminus\{0,1,-3\}$ &  strictly GIT-semistable & finite \\
\hline
$\{f_1+f_5=0\}$ & strictly GIT-semistable & finite \\
\hline
\end{tabular}
\end{center}

Here $\mathrm{Aut}(Q,C_4)$ denotes the group of automorphisms of $Q$ which leave $C_4$ invariant, and $\Z_2$ denotes the cyclic group of order 2. It is known (see \cite[Lemma 9.2]{CPS},\cite[\S~5.9]{Book}) that the group $\mathrm{Aut}(Q,C_4)$ is finite except for three cases which are described in the table above. The action of $\Gm$ on $Q$ in the first two cases of the table is given by the following action on $\P^4$:
$$t\colon [x_0:x_1:x_2:x_3:x_4]\mapsto[x_0:tx_1:t^2x_2:t^3x_3:t^4x_4].$$
In these two cases, the threefold $X$ is known to be K-polystable (see \cite[\S~5.9]{Book}). For the other three cases, the threefold $X\to Q$ is strictly K-semistable (K-semistable but not K-polystable). To see this, note that the closure of $\Gm$-orbit of $Q$ under the above action (considered to be acting on the parameter space $\P^5$) contains a point corresponding to the quadric:
$$Q_\lambda:=\big\{\lambda f_2+f_5=0\big\},$$
for some $\lambda\in\C\setminus\{1,-3\}$. This defines an isotrivial degeneration of $X$ to a K-polystable threefold $X_\lambda$, which is the blow-up of the quadric $Q_\lambda$. Thus $X$ is K-semistable by \cite{BLX}, but is not K-polystable by \cite{BX} since $\mathrm{Aut}(X)\ncong\mathrm{Aut}(X_\lambda)$\footnote{One can also see that the threefold corresponding to the equation $f_0+f_5=0$ is not K-polystable because K-polystable Fano threefolds have reductive automorphism groups \cite{Mat57,ABHLX}.} implies that $X\ncong X_\lambda$. See \cite[Corollary 1.13]{Book} for a more detailed explanation of this phenomenon.
\end{remark}

Now let us turn to the proof of Conjecture \ref{conjecture:main}. In this paper we prove the following two theorems, which imply this conjecture:

\begin{classificationtheorem*}
\label{classification}
There is $\phi\in\mathrm{SL}_2(\mathbb{C})$ such that one of the following cases holds:
\begin{enumerate}
\item[$(\mathrm{1})$] $\phi^*(Q)=\{\mu(f_0+f_4)+\lambda f_2+f_5=0\}$ for some $\lambda,\mu\in\mathbb{C}$ such that
$$(\mu - 2)(\mu + 2)(\lambda - 1)(\lambda^2 + \mu^2 + 2\lambda - 3)\neq0,$$
\item[$(\mathrm{2})$] $\phi^*(Q)=\{f_0+\lambda f_2+f_5=0\}$ for some $\lambda\in\mathbb{C}\setminus\{1,-3\}$,
\item[$(\mathrm{3})$] $\phi^*(Q)=\{f_1+f_5=0\}$.
\end{enumerate}
\end{classificationtheorem*}

\begin{maintheorem*}
If the group $\mathrm{Aut}(Q,C_4)$ contains a subgroup isomorphic to $\Z_2\times\Z_2$, then $X$ is K-polystable.
\end{maintheorem*}

\begin{proof}[Proof of Conjecture~\ref{conjecture:main}]
Suppose that $Q$ is GIT-polystable. Then by the Classification Theorem and the classification of GIT-stability discussed in Remark \ref{remark:GIT}, we may assume that it is given by an equation of the form $(1)$. Then the group $\mathrm{Aut}(Q,C_4)$ has a subgroup isomorphic to $\Z_2\times\Z_2$, which is generated by the involutions
\begin{align*}
    \iota\colon [x_0:x_1:x_2:x_3:x_4]&\mapsto[x_4:x_3:x_2:x_1:x_0], \\ 
    \tau\colon[x_0:x_1:x_2:x_3:x_4]&\mapsto [x_0:-x_1:x_2:-x_3:x_4],
\end{align*}
so that $X$ is K-polystable by the Main Theorem.

Now suppose that $Q$ is strictly GIT-semistable. Then we may assume that $Q$ is given by one of the equations $(2)$ and $(3)$, so that $X$ is strictly K-semistable by the discussion at the end of Remark \ref{remark:GIT}.
\end{proof}

\begin{remark}
\label{remark:Sarkisov}
Our proof will make use of the following Sarkisov link, which will be explained in Lemma \ref{sarkisov}:
$$
\xymatrix{
&X\ar[ld]_{\pi}\ar[rd]^{\pi^\prime}&\\
Q\ar@{-->}[rr]_{\phi}&&Q^{\prime}.}
$$
Breifly, $Q'$ is a smooth quadric threefold in $\P^4$, $\phi$ is the rational map given by the linear system of quadric sections of $Q$ which contain $C_4$, and $\pi'$ is the blow-up of $Q'$ in a rational normal quartic curve $C_4'\subset Q'$, with exceptional divisor $E'$. Then $\phi$ contracts secants of $C_4$ which are contained in $Q$.  Let
$$V_3=\{ x_0x_3^2 - 2x_1x_2x_3 - x_0x_2x_4 + x_1^2x_4 + x_2^3=0\}\subset\P^4.$$
Then $V_3$ is spanned by the secants of $C_4$, and $\phi$ contracts the surface $V_3\cap Q$. Moreover, $V_3\cap Q$ is the unique non-normal $(2,3)$-complete intersection which has multiplicity two along $C_4$, whose equation can be found by hand. We have that the $\pi'$-exceptional divisor $E'$ is the strict transform of $V_3\cap Q$ with respect to $\pi$. Thus follows the linear equivalence $E'\sim 3H-2E$.
\end{remark}

Let us describe the structure of this paper. In Section~\ref{section:Abban-Zhuang}, we present several results from \cite{AbbanZhuang,Book} which are used in the proof of the Main Theorem. In Section~\ref{section:basic-properties}, we present some basic properties of threefolds in the deformation family \textnumero 2.21 which will be crucial in the proof: namely some properties of the exceptional divisor $E$ of $\pi\colon X\to Q$, and of pullbacks by $\pi$ of hyperplane sections of $Q$. In Section~\ref{section:classification} we prove the Classification Theorem for elements of \textnumero 2.21, and then in Section~\ref{section:technical} we prove the following two key results:

\begin{technicallemma1*}
Let $P$ be a point in $X$ which is not contained in $E\cup E'$.
Then $\delta_P(X)>1$.
\end{technicallemma1*}

\begin{technicallemma2*}
Let $D$ be a prime divisor over $X$ whose centre on $X$ is a curve $C$ contained in $E\cup E'$.
Suppose, in addition, that neither $\pi(C)$ nor $\pi^\prime(C)$ is a point. 
Then $A_X(D)>S_X(D)$. 
\end{technicallemma2*}

Finally, using these results, we prove the Main Theorem in Section~\ref{section:proof}. In Section \ref{section:normal bundles}, we present a classification result on the normal bundle of the rational normal quartic curve in a quadric, which refines Lemma \ref{lemma:normal-bundle} in Section \ref{section:basic-properties}.\\ 

\medskip
\noindent
\textbf{Acknowledgements.}
I am grateful to I. Cheltsov for introducing me to this topic, and for his attentive guidance, support and insight with regards to both the creation of this document, and my study of the subject in general.

\section{Abban--Zhuang theory \& Fujita's formulas}
\label{section:Abban-Zhuang}

In this section we present several key formulas from Abban--Zhuang theory discovered in \cite{AbbanZhuang,Book}. \\

Let $X$ be a smooth Fano threefold, $D$ be a prime divisor over $X$ (via a birational morphism $f\colon\widetilde{X}\to X$ with $D\subset\widetilde{X}$). Write $A_X(D)=1+\text{ord}_D(K_{\widetilde{X}}-f^*(K_X))$ for the log discrepancy of $D$ with respect $X$, and $$S_X(D)=\frac{1}{(-K_X)^3}\int_{0}^{\infty}\mathrm{vol}\big(f^*(-K_X)-uD\big)du.$$
Let $S$ be a smooth surface in $X$.
Set

$$
\tau=\mathrm{sup}\Big\{u\in\mathbb{R}_{\geqslant 0}\ \big\vert\ \text{the divisor  $-K_X-uS$ is pseudo-effective}\Big\}.
$$
For $u\in[0,\tau]$, let $P(u)$ be the~positive part of the~Zariski decomposition of the~divisor $-K_X-uS$,
and let $N(u)$ be its negative part. For any prime divisor $Y$ over $S$ (via a birational morphism $g\colon\widetilde{S}\to S$, with $Y\subset\widetilde{S}$), we let $\widetilde{N}(u)$ be the strict transform on $\widetilde{S}$ of the divisor $N(u)\vert_{S}$. For $u\in[0,\tau]$, we let
$$
\widetilde{t}(u)=\sup\Big\{v\in \mathbb{R}_{\geqslant 0} \ \big| \ g^*\big(P(u)|_S\big)-vY \text{ is pseudo-effective}\Big\}.
$$
Let $\widetilde{P}(u,v)$ and $\widetilde{N}(u,v)$ be the positive and negative parts, respectively, of the Zariski decomposition of the divisor $g^*\big(P(u)|_S\big)-vY$. We set
$$
S\big(W^S_{\bullet,\bullet};Y\big)=\frac{3}{(-K_X)^3}\int_0^\tau\big(P(u)\big\vert_{S}\big)^2\cdot\mathrm{ord}_{Y}\big(g^*(N(u)\big\vert_{S})\big)du+\frac{3}{(-K_X)^3}\int_0^\tau\int_0^{\widetilde{t}(u)} \widetilde{P}(u,v)^2dvdu.
$$
Firstly we have the following estimate, which will be used in the proof of Theorem \ref{lemma:technical2}:
\begin{theorem}[{\cite{AbbanZhuang} and\cite[Theorem 1.112]{Book}}]
\label{theorem:Hamid-Ziquan-Kento-1}
Suppose $X$ contains an irreducible normal surface $S$, which in turn contains an irreducible curve $C$. Then for any prime divisor $D$ over $X$ with centre $C$, we have
$$
\frac{A_X(D)}{S_X(D)}\geqslant\min\Bigg\{\frac{1}{S_X(S)},\frac{1}{S\big(W^S_{\bullet,\bullet};C\big)}\Bigg\}.
$$
\end{theorem}

Now let $P$ be a point of $S$, and let $g\colon\widetilde{S}\to S$ be the blow-up of the surface $S$ at $P$. Let $Y$ be the $g$-exceptional curve. For every point $O\in Y$, we set
$$
S\big(W_{\bullet, \bullet,\bullet}^{S,Y};O\big)=
\frac{3}{(-K_X)^3}\int_0^\tau\int_0^{\widetilde{t}(u)}\big(\widetilde{P}(u,v)\cdot Y\big)^2dvdu+
F_O\big(W_{\bullet,\bullet,\bullet}^{S,Y}\big),
$$
where
$$
F_O\big(W_{\bullet,\bullet,\bullet}^{S,Y}\big)=
\frac{6}{(-K_X)^3}\int_0^\tau\int_0^{\widetilde{t}(u)}\big(\widetilde{P}(u,v)\cdot Y\big)\times\mathrm{ord}_O\big(\widetilde{N}(u)\big|_Y+\widetilde{N}(u,v)\big|_{Y}\big)dvdu.
$$

The \textit{local delta invariant} of $X$ at a point $P\in X$ is defined as 
$$\delta_P(X)\,\,\,=\inf_{\substack{D/X\\P\in c_X(D)}}\frac{A_X(D)}{S_X(D)},$$
where the infimum runs over all prime divisors $D$ over $X$ with centre, denoted $c_X(D)$, containing $P$. Then we have the following estimate, which will be used in the proof of Theorem \ref{lemma:technical1}:
\begin{theorem}[{\cite{AbbanZhuang} and \cite[Corollary 1.110]{Book}}]
\label{theorem:Hamid-Ziquan-Kento-3}
One has
$$
\delta_P(X)\geqslant\min\Bigg\{\frac{1}{S_X(S)},\frac{2}{S\big(W^S_{\bullet,\bullet};Y\big)},\inf_{O\in Y}\frac{1}{S\big(W_{\bullet, \bullet,\bullet}^{S,Y};O\big)}\Bigg\}.
$$
\end{theorem}

\begin{remark}
\label{remark:6}
    If $P\not\in\mathrm{Supp}(N(u))$ for every $u\in[0,\tau]$, then $\text{ord}_Y(g^*(N(u)|_S))=0$, so that the formulas for $S(W_{\bullet, \bullet}^{S};Y)$ and
$F_O(W_{\bullet,\bullet,\bullet}^{S,Y})$ simplify as
\begin{align*}
S\big(W_{\bullet,\bullet}^{S};Y\big)&=\frac{3}{(-K_X)^3}\int_0^\tau\int_0^{\widetilde{t}(u)}\big(\widetilde{P}(u,v)\big)^2dvdu,\\
F_O\big(W_{\bullet,\bullet,\bullet}^{S,Y}\big)&=\frac{6}{(-K_X)^3}\int_0^\tau\int_0^{\widetilde{t}(u)}\big(\widetilde{P}(u,v)\cdot Y\big)\times\mathrm{ord}_O\big(\widetilde{N}(u,v)\big|_{Y}\big)dvdu.
\end{align*}
\end{remark}

\section{Properties of smooth Fano threefolds of rank $2$ and degree $28$}
\label{section:basic-properties}

In the following, $C_4$ is the smooth rational quartic curve in $\P^4$ given by the parametrisation
$$
[u:v] \mapsto[u^4:u^3v:u^2v^2:uv^3:v^4],
$$
and $Q\subset\P^4$ is any smooth quadric containing $C_4$. Let $\phi\colon Q\dashrightarrow\P^4$ be the rational map given by the linear system of quadric sections of $Q$ which contain $C_4$. We now prove the assertions in Remark \ref{remark:Sarkisov}.

\begin{lemma}
\label{sarkisov}
   Let $\pi\colon X\to Q$ be the blow-up of $Q$ in $C_4$. Then there is a commutative diagram:
   $$
\xymatrix{
&X\ar[ld]_{\pi}\ar[rd]^{\pi^\prime}&\\
Q\ar@{-->}[rr]_{\phi}&&Q^{\prime},}
$$
where $Q'$ is a smooth quadric threefold in $\P^4$ and $\pi'$ is the blow-up of $Q'$ in a rational normal quartic curve.
\end{lemma}

\begin{proof}
Let $H$ be the pullback by $\pi$ of a hyperplane section of $Q$, let $\pi'$ by the rational map given by the linear system $|2H-E|$, and let $Q'$ by the closure of the image of $\phi$. Since $C_4$ is a scheme-theoretic intersection of quadrics, we have that the linear system $|2H-E|$ is free, so that the map $\pi'$ is a morphism, and moreover that the above diagram commutes since the linear system $|2H-E|$ consists of the strict transforms of quadrics which contain $C_4$. Since the map $\phi$ is given by quadrics, it contracts the secant variety $V_3\cap Q$ of $C_4$, so that $\pi'$ contracts its strict transform, $E'$, onto some subvariety $C_4'\subset Q'$ of dimension at most 1.\\ 

We now describe the Chow ring of $X$. Let $\mathbf{f}$ denote the rational equivalence class in $X$ of a fibre of $\pi$ over a general point of $C_4$, and let $\mathbf{l}$ denote the class pullback by $\pi$ of a line in $Q$. Then the following properties are well-known (see for example \cite[Lemma 2.2.14]{Isk}):
\begin{align*}
 E\cdot\pi^*(D)&\sim(C_4\cdot D)\mathbf{f},\\
 E\cdot\pi^*(\mathbf{z})&=0, \\
\mathbf{f}\cdot\pi^*(D)&=0,
\end{align*}
for all divisors $D$ and algebraic 1-cycles $\mathbf{z}$ on $Q$. Furthermore, we have 
\begin{align*}
    E^3&=-\text{deg}(N_{C_4/Q}), \\ 
    E^2&\sim-\pi^*(C_4)+\text{deg}(N_{C_4/Q})\mathbf{f}, \\ 
    E\cdot\mathbf{f}&=-1,
\end{align*}
where the degree of the normal bundle of $C_4$ in $Q$ can be computed as:
\begin{align*}
    \text{deg}(N_{C_4/Q})&=2g(C_4)-2-K_Q\cdot C_4=10.
\end{align*}

In particular, we have the rational equivalence $E^2\sim10\mathbf{f}-4\mathbf{l}$, and the following intersection numbers on $X$:
$$H^3=2,\quad H^2\cdot E=0,\quad H\cdot E^2=-4,\quad E^3=-10.$$

Then $(2H-E)^3=2$, so that the morphism $\pi'$ has three-dimensional image and is either birational onto a quadric, or a double cover of a hyperplane in $\P^4$. But the image of $\phi$ is not contained in any hyperplane of $\P^4$, so that $\pi'$ must be birational onto an irreducible quadric $Q'$. 

Let $\mathbf{f}'$ denote the class of the strict transform of a secant of $C_4$ with respect to $\pi$, and observe that $\mathbf{f}'\sim\mathbf{l}-2\mathbf{f}$. Since $\pi'$ contracts $\mathbf{f}'$ to a point and $X$ is a Fano variety of Picard rank 2, we have that $\mathbf{f}'$ spans a $K_X$-negative extremal ray $R\subset\overline{\mathrm{NE}}(X)$ which consists precisely of curve classes contracted by $\pi'$. Moreover since $\pi'$ is birational and $Q'$ is normal we have $(\pi')_*\O_X\cong\O_{Q'}$, so that $\pi'$ is the extremal contraction of $R$. In particular, the exceptional locus of $\pi'$ is precisely $E'$.

We now show that $C_4'$ is a curve. For this, notice that $E|_{E'}$ is a curve contained in $E'$. Then we have 
\begin{align*}
    (2H-E).E|_{E'} = (2H-E).E.(3H-2E) =8>0,
\end{align*}
so that $\pi(E|_{E'})$, and hence $C_4'$, is a curve. Then by the classification of $K_X$-negative extremal contractions on a smooth threefold, $\pi'\colon X\to Q'$ is the blow-up along $C_4'$, and $Q'$ and $C_4'$ are smooth. \\
    
The last thing to show is that $C_4'$ is indeed a rational normal quartic curve. From the linear equivalence $E'\sim3H-2E$, we have that:
$$
    2g(C_4')-2-K_{Q'}.C_4'=\mathrm{deg}N_{C_4'/Q'} 
    =-(E')^3 
    = 10.
$$
Combining this with the adjunction formula $K_{Q'}=-3H'$ gives the following Diophantine equation:
$$2g(C_4')+3H'.C_4'=12.$$
Then since $g(C_4')$ is non-negative and $H'.C_4'$ is positive, we have the two solutions $g(C_4')=0,H'.C_4'=4$ and $g(C_4')=3,H'.C_4'=2$. The latter is obviously impossible, so we must have that $C_4'$ is a rational normal quartic.
\end{proof}

The following properties about $X$ will be useful later on. 

\begin{lemma}
\label{lemma:normal-bundle}
Let $E$ be the exceptional divisor of the blow-up $\pi\colon X\to Q$ of the curve~$C_4$.
Then $E\cong\mathbb{F}_n$ for $n\in\{0,2,4,6\}$.
\end{lemma}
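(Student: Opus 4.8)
The plan is to realise $E$ as a Hirzebruch surface through the projectivised normal bundle, fix its degree invariant by adjunction, and then control the splitting type of the normal bundle using that $C_4$ is cut out by quadrics.

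Since $\pi$ is the blow-up of the smooth threefold $Q$ along the smooth rational curve $C_4\cong\P^1$, its exceptional divisor is $E\cong\P(N_{C_4/Q})$, a $\P^1$-bundle over $C_4$. By Grothendieck's splitting theorem $N_{C_4/Q}\cong\mathcal O_{\P^1}(a)\oplus\mathcal O_{\P^1}(b)$ for some integers $a\leqslant b$, and hence $E\cong\mathbb{F}_{b-a}$. So it is enough to prove $b-a\in\{0,2,4,6\}$. Now $a+b=\deg N_{C_4/Q}=10$ was already computed above (by adjunction on $C_4\subset Q$), so $b-a=10-2a$ is automatically a non-negative even integer, and it remains only to show that $b\leqslant 8$.

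For this I would use that $\mathcal I_{C_4/Q}$ is generated by quadrics: the restrictions to $Q$ of $f_0,\dots,f_5$ span a $5$-dimensional subspace of $H^0\big(Q,\mathcal I_{C_4/Q}(2)\big)$ — they span the $6$-dimensional space of quadrics through $C_4$ in $\P^4$, and one linear relation is imposed by the equation of $Q$ — and, since $f_0,\dots,f_5$ generate $\mathcal I_{C_4/\P^4}$, their restrictions generate $\mathcal I_{C_4/Q}$. This gives a surjection $\mathcal O_Q(-2)^{\oplus 5}\twoheadrightarrow\mathcal I_{C_4/Q}$; restricting it to $C_4$ and using $\mathcal O_Q(1)|_{C_4}\cong\mathcal O_{\P^1}(4)$ produces a surjection $\mathcal O_{\P^1}(-8)^{\oplus 5}\twoheadrightarrow\mathcal I_{C_4/Q}/\mathcal I_{C_4/Q}^2=N^\vee_{C_4/Q}$. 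Dualising, $N_{C_4/Q}\cong\mathcal O_{\P^1}(a)\oplus\mathcal O_{\P^1}(b)$ embeds as a subbundle of $\mathcal O_{\P^1}(8)^{\oplus 5}$, so the summand $\mathcal O_{\P^1}(b)$ admits a non-zero map to some $\mathcal O_{\P^1}(8)$, which forces $b\leqslant 8$. Then $a=10-b\geqslant 2$, hence $b-a=2b-10\leqslant 6$, and the lemma follows.

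There is no serious obstacle: the argument is essentially bookkeeping around Grothendieck splitting, adjunction and the quadratic generation of $\mathcal I_{C_4}$. The only points to treat with a little care are that the $f_i$ genuinely restrict to a generating set of $\mathcal I_{C_4/Q}$, and that tensoring the surjection $\mathcal O_Q(-2)^{\oplus 5}\twoheadrightarrow\mathcal I_{C_4/Q}$ with $\mathcal O_{C_4}$ yields the conormal sheaf on the right. (If one prefers, invoking the classical identification $N_{C_4/\P^4}\cong\mathcal O_{\P^1}(6)^{\oplus 3}$ together with the normal bundle sequence $0\to N_{C_4/Q}\to N_{C_4/\P^4}\to\mathcal O_{\P^1}(8)\to 0$ gives the sharper bound $b\leqslant 6$, i.e. $b-a\in\{0,2\}$; either is enough for the statement.)
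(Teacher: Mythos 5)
Your proof is correct, and it takes a visibly different route from the paper's, although both rest on the same two geometric inputs: $\deg N_{C_4/Q}=10$ (forcing $n$ even) and the quadratic generation of the ideal of $C_4$ (forcing the upper bound). The paper never mentions the splitting type of the normal bundle: it writes $-E|_E=a\mathbf{f}+\mathbf{s}$ in $\operatorname{Pic}(\mathbb F_n)$, extracts $2a-n=E^3=-10$ to get parity, and then uses that $|2H-E|$ is base-point free — hence $(2H-E)|_E$ is nef, hence $(2H-E)|_E\cdot\mathbf{s}=8+a-n\geqslant 0$ — to get $n\leqslant 6$. You instead work on $C_4$ itself, splitting $N_{C_4/Q}\cong\mathcal O(a)\oplus\mathcal O(b)$ and bounding $b\leqslant 8$ via the surjection $\mathcal O_{\mathbb P^1}(-8)^{\oplus 5}\twoheadrightarrow N^\vee_{C_4/Q}$; this is essentially the same nefness statement read through the tautological bundle on $\mathbb P(N^\vee)$, so the two arguments are parallel in substance but not in presentation. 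Your sheaf-theoretic bookkeeping is sound (the dualised surjection of locally free sheaves is injective, and right-exactness of restriction gives the surjection onto $\mathcal I/\mathcal I^2$). One bonus of your approach that the paper does not exploit: the parenthetical via $N_{C_4/\mathbb P^4}\cong\mathcal O_{\mathbb P^1}(6)^{\oplus 3}$ and the normal bundle sequence gives the sharper conclusion $n\in\{0,2\}$, which would slightly improve the constants appearing later in the proof of Theorem~\ref{theorem:technical2}, though the weaker bound suffices everywhere it is used.
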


\begin{proof}
The curve $C_4$ is rational, so that the $\P^1$-bundle $E$ is isomorphic to a Hirzebruch surface $\F_n$, for some $n\geqslant0$. Let $\mathbf{s}$ be the class of the unique curve on $E$ with $\mathbf{s}^2=-n$. Then $\text{Pic}(E)=\Z \mathbf{f}\oplus\Z \mathbf{s}$, and $\mathbf{s}\cdot\mathbf{f}=1, \,\mathbf{f}^2=0$. \\ 

Now, the divisor $-E|_E$  is linearly equivalent to $a\mathbf{f}+b\mathbf{s}$ for some $a,b\in \Z$, but since $E|_E\cdot\mathbf{f}=E\cdot\mathbf{f}=-1$, we have that $b=1$. Then 
\begin{align*}
E^3&=(-E|_E)^2\\ 
&=2a-n \\
&=-10.
\end{align*}
This gives us that $a=\frac{n-10}{2}$, so that $n$ must be even. Let $H$ be the class of the pullback by $\pi$ of a general hyperplane section of $Q$. Then since $|2H-E|$ is free, $(2H-E)|_E$ is nef, so in particular we have that $(2H-E)|_E\cdot \mathbf{s}\geqslant 0$.\\

We have that $H|_E\sim4\mathbf{f}$ since $\pi(H)\cdot C_4$ consists of 4 points (counted with multiplicity). Thus, we have
\begin{align*}
(2H-E)|_E\cdot\mathbf{s}&=(8+a)\mathbf{f}\cdot \mathbf{s}+\mathbf{s}^2\\
&=8+a-n \\ 
&\geqslant0,
\end{align*}
and so $n\leqslant 6$.
\end{proof}

\begin{lemma}
\label{lemma:del-Pezzo}
\raggedright Let $P$ be a point in $X$ such that $P\notin E\cup E^\prime$, let $S$ be a general surface in $|H|$ containing $P$.
Then
\begin{enumerate}
\item the surface $S$ is a smooth del Pezzo surface of degree $4$,
\item the point $P$ is not contained in any $(-1)$-curve in $S$.
\end{enumerate}
\end{lemma}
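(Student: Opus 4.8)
The plan is to analyse the surface $S \in |H|$ directly as a complete intersection. Since $H$ is the pullback by $\pi$ of a hyperplane section of $Q$, a general $S \in |H|$ is the proper transform of a general hyperplane section $\overline{S} = Q \cap \Pi$ of $Q$, where $\Pi \cong \P^3$ is a hyperplane. Such $\overline{S}$ is a smooth quadric surface in $\P^3$, hence $\overline{S} \cong \P^1 \times \P^1$, and it meets $C_4$ transversally in $\deg C_4 = 4$ points (this uses Bertini, since $C_4$ is smooth and the hyperplanes move in a base-point-free linear system). Thus $S \to \overline{S}$ is the blow-up of $\P^1\times\P^1$ at $4$ points; to conclude that $S$ is a smooth del Pezzo surface of degree $4$ I must check that the $4$ points are in general position, i.e. no two lie on a ruling line and not all four lie on a $(1,1)$-curve. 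Equivalently, since a blow-up of $\P^1\times\P^1$ at one point is the blow-up of $\P^2$ at two points, I want the corresponding configuration of $5$ points in $\P^2$ to be in general position. This follows because $C_4$ is not contained in any hyperplane of $\P^4$ and its secant variety is the cubic $V_3$: a ruling line of $\overline{S}$ meeting $C_4$ in two points would be a secant line of $C_4$ lying in $Q$, forcing $\overline{S}$ (and hence $\Pi$) into a special position that a general hyperplane avoids; and similarly a $(1,1)$-curve through all four points is a conic, which would make the $4$ points of $C_4 \cap \Pi$ lie on a plane conic — but a general $\Pi$ cuts $C_4$ in $4$ points in linearly general position inside $\Pi \cap \langle C_4 \rangle$. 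Invoking \cite[Book]{} or the explicit description of $V_3$ in the excerpt lets me quantify "general $\Pi$" precisely.

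For part (2), the point is that $S$ is a \emph{general} member of the sub-linear-system of $|H|$ through $P$. Since $P \notin E$, we have $\pi(P) \notin C_4$, and the hyperplanes through $\pi(P)$ form a base-point-free linear system on $Q \setminus \{\pi(P)\}$ near $\pi(P)$; likewise $P \notin E'$ ensures the symmetric condition on $Q'$. The $(-1)$-curves on a degree-$4$ del Pezzo surface $S$ are the $16$ lines, and they come in finitely many classes: the $4$ exceptional curves $e_i$ of $S \to \P^1\times\P^1$ (over the blown-up points), and their "conjugates" under the intersection form. A $(-1)$-curve not contracted by $\pi$ maps to a line in $\overline{S} \subset \P^3$, hence to a line $\ell \subset Q \subset \P^4$; the exceptional $e_i$ are fibres $\mathbf{f}$ of $\pi$ sitting over points of $C_4 \cap \Pi$, which a general $\Pi$ through $\pi(P)$ does not meet at $\pi(P)$ (as $\pi(P) \notin C_4$). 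So it suffices to show that $P$ avoids every line of $Q$ through $\pi(P)$ that happens to lie in $\overline{S}$ — but for a general hyperplane $\Pi$ through the fixed point $\pi(P)$, the surface $\overline{S} = Q \cap \Pi$ contains no line through $\pi(P)$ at all: the lines of $Q$ through a fixed point sweep out a surface (the intersection of $Q$ with its tangent hyperplane, a quadric cone), and a general $\Pi$ through $\pi(P)$ is not tangent to $Q$ there, so $\overline{S}$ is smooth at $\pi(P)$ and the two ruling lines of the smooth quadric $\overline{S}$ through $\pi(P)$ are the only lines of $\overline{S}$ through $\pi(P)$ — these are lines of $Q$ through $\pi(P)$, and for general $\Pi$ I can arrange they are among the "new" lines rather than forcing $P$ onto one of the finitely many contracted curves. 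I will phrase this as: the finitely many $(-1)$-curves on $S$, being either fibres $\mathbf{f}$ over points of $C_4$ or proper transforms of lines in $\overline{S}$, move in families as $\Pi$ varies through $\pi(P)$, and a dimension count shows a general such $S$ has no $(-1)$-curve passing through the single point $P$.

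The main obstacle is the genericity bookkeeping in part (2): I must make precise that "general $S$ in $|H|$ containing $P$" avoids \emph{all} $16$ lines at the point $P$ simultaneously, which requires knowing the dimension of the linear system $|H - P|$ (it is $|H|$ minus one condition, so dimension $3$) and comparing it against the dimension of the locus of $(\Pi, \text{line})$ incidences. I expect this to reduce to the standard fact that on a del Pezzo surface of degree $4$ the union of the $16$ lines is a proper closed subset, combined with the observation that as $\Pi$ ranges over the $\P^2$ of hyperplanes through $\pi(P)$, the point $P$ sweeps out a positive-dimensional family of points on the corresponding family of surfaces, so it cannot always land on the bounded-degree curve configuration. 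The explicit equation of $V_3$ and the transversality statements recorded just before this lemma in the excerpt should make all the exceptional-position analysis effective; I would cite \cite{Isk} for the structure of hyperplane sections of quadric threefolds and \cite{Book} for the standard del Pezzo facts rather than re-deriving them.
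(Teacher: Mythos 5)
Your part (1) follows essentially the paper's route (blow-up of $\P^1\times\P^1$ at the four points of $\overline{S}\cap C_4$, then check general position), and the two degeneracy conditions you identify are the right ones; the paper makes the secant-avoidance precise with an incidence variety $I\subset\P^3\times C_4'$ over the $\P^3$ (not $\P^2$, as you write later) of hyperplanes through $\pi(P)$, and rules out four points on a $(1,1)$-curve by noting that such a curve is a plane section while any four distinct points of the rational normal quartic span a hyperplane. That part is fine in outline.

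Part (2) has a genuine gap. You assert that every $(-1)$-curve of $S$ not contracted by $\pi$ maps to a \emph{line} in $\overline{S}\subset\P^3$. This is false: of the $16$ lines on the degree-$4$ del Pezzo surface, $4$ are the exceptional curves, $8$ map to ruling lines of $\overline{S}$, but the remaining $4$ map to $(1,1)$-curves of $\overline{S}$ --- plane conics through three of the four points $O_i$, not lines of $Q$. Your reduction to ``lines of $Q$ through $\pi(P)$'' therefore misses these four curves entirely, and this is precisely the case that requires real work: one must show that for general $\overline{S}$ the point $\overline{P}$ does not lie in the plane spanned by three of the $O_i$. The paper does this by projecting from $\overline{P}$ (using $P\notin E'$ to ensure $\overline{P}$ is on no secant, so the projection of $C_4$ is a smooth space quartic), observing that the projected plane would be a trisecant of the image curve and hence contained in the unique quadric through it, and deriving a contradiction with the generality of the projected hyperplane. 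Your fallback ``dimension count'' does not substitute for this: to conclude that the incidence locus is a proper subvariety of the $\P^3$ of hyperplanes through $\pi(P)$ you must first show the bad condition is not identically satisfied, which is exactly what the projection argument establishes. (Your treatment of the ruling-line case is also internally inconsistent --- you first claim $\overline{S}$ contains no line through $\pi(P)$, then correctly note it contains exactly two --- but that case is salvageable via the cone $Q\cap T_{\overline{P}}Q$ meeting $C_4$ in at most four fixed points, as in the paper.)
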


\begin{proof}
First we prove (1). Let $\ol{S}=\pi(S)$ and $\ol{P}=\pi(P)$. Then $\ol{S}$ is a general hyperplane section of the quadric $Q$ that contains the point $\ol{P}$. This implies that $\ol{S}$ is a smooth quadric surface, so $\ol{S}\cong\P^1\times\P^1$. Then $\ol{S}$ intersects $C_4$ in 4 distinct points, so that $S$ is the blow-up of $\P^1\times\P^1$ in four points, $O_1,O_2,O_3,O_4\in\ol{S} $. Then $S$ is a smooth del Pezzo surface of degree $4$ if and only if the anti-canonical divisor $-K_S$ is ample, which is the case if and only if: \\
    
    \begin{enumerate}
        \item[(a)] No two of the $O_i$ lie on any $(0,1)$ or $(1,0)$-divisor of $\ol{S}$. \\
        
        Such divisors are lines in $\ol{S}$, and $O_i,O_j$ lie on a line in $\ol{S}$, which is just the secant through $O_i,O_j$, if and only if this secant lies in $Q$. Now the hyperplane sections of $Q$ which contain $\ol{P}$ are parametrised by $\P^3$. Recall that the secants of $C_4$ which are contained in $Q$ span a surface $V_3\cap Q=\pi(E')$, and that $E'$ is a $\P^1$-bundle over the curve $C_4'$, so that such secants are parametrised by $C_4'$. We define the incidence subvariety
        $$I=\big\{\,(H_{\ol{P}},l)\,\,\mid\,\, l\subset H_{\ol{P}}\,\big\}\subset \P^3\times C_4'.$$
        Let $\text{pr}_1\colon I\to\P^3,\text{pr}_2\colon I\to C_4'$ be the natural projections. Then since the fibres of $\text{pr}_2$ are hyperplanes in $\P^4$ which contain $\ol{P}$ and a given secant, and these are parametrised by $\P^1$, we see that $I$ is a $\P^1$-bundle over a curve. In particular, $I$ a surface. 
        
        Now, the variety $\text{pr}_1(I)$ parametrises hyperplane sections of $Q$ which contain a secant of $C_4'$, but since $\text{dim}\,\text{pr}_1(I)\leqslant2$, a general hyperplane section of $Q$ does not contain any secant of $C_4$.\\
        
        \item[(b)] $O_1,O_2,O_3,O_4$ don't lie on a $(1,1)$-divisor of $\ol{S}$. \\

        This is clear as such divisors are given by plane sections of $\ol{S}$ (considered as a smooth quadric in $\P^3$), so that $O_1,O_2,O_3,O_4$ lie on a $(1,1)$-divisor of $\ol{S}$ if and only if they lie on the intersection of two distinct hyperplanes in $\P^4$. This is impossible because four distinct points of $C_4$ lie on a unique hyperplane. \\ 
        
    \end{enumerate}

Since $S$ is a del Pezzo surface of degree 4, it contains 16 $(-1)$-curves. They are: \\ 
    
    \begin{itemize}
        \item The four exceptional curves $E_i$, \\
        \item The strict transforms of the eight lines $L_i,L_i'\subset\ol{S}$ through the $O_i$ (that is, $L_i$ and $L_i'$, are the two rulings of $\ol{S}$ through $O_i$), \\

        \item The strict transforms of the $(1,1)$-curves $C_i\subset\ol{S}$ through $O_j,O_k,O_l$. \\ 
    \end{itemize}

    Now we prove (2). Note that $\ol{P}$ lies on a $l_i$ line in $\ol{S}$ through $O_i$ if and only if $l_i$ lies in $Q$. The union of lines in $Q$ through $\ol{P}$ is given by the intersection $Q_P=Q\cap T_{\ol{P}}Q$, which is a hyperplane section of $Q$ singular at $\ol{P}$, isomorphic to a quadric cone in $\P^3$. Then the curve $C_4$ intersects $Q_P$ in at most 4 points, so there are at most four lines through $\ol{P}$ and $C_4$ which lie in $Q$. The hyperplane section $\ol{S}$ of $Q$ cuts out at most 4 points $O_1,O_2,O_3,O_4$ on $C_4$, and so in general the line $l_i$ through $O_i$ and $\ol{P}$ does not lie in $Q$.\\ 

    Now suppose that $\ol{P}\in C_i$, so that $\ol{P}$ lies in the plane $\Pi$ spanned by $O_j,O_k,O_l$ in $\P^4$. Let $p:\mathbb{P}^4\dasharrow\mathbb{P}^3$ be the projection from the point $\ol{P}$. Recall that all secants of the curve $C_4$ that are contained in $Q$ are contained in the singular threefold $\pi(E')$. Since $P$ is not contained in $E'$, we see that $\ol{P}$ is not contained in any secant line of the curve $C_4$. Thus, $p(C_4)$ is a smooth rational curve of degree $4$ in $\P^3$, $p(\Pi)$ is a trisecant of the curve $p(C_4)$, and $\pi(\overline{S})$ is a general plane in $\mathbb{P}^3$. Recall from \cite[Exercise 6.1]{Hartshorne} that $p(C_4)$ is contained in a unique smooth quadric surface $S_2$, and that all trisecants $p(C_4)$ are contained in $S_2$. Since $p(\overline{S})$ is a general plane, $p(\overline{S})\cap S_2$ is then a smooth conic, which is a contradiction.
\end{proof}

\section{Proof of Classification Theorem}
\label{section:classification}
Recall that any quadric $Q$ containing $C_4$ is given by the homogeneous equation
$$s_0f_0+s_1f_1+s_2f_2+s_3f_3+s_4f_4+s_5f_5=0,$$
where
\begin{center}
\begin{tabular}{ l c }
$f_0=x_3^2-x_2x_4$, & \text{weight 6}\\
$f_1=x_2x_3-x_1x_4$, & \text{weight 5}\\
$f_2=x_2^2-x_0x_4$, & \text{weight 4}\\
$f_3=x_1x_2-x_0x_3$, & \text{weight 3}\\
$f_4=x_1^2-x_0x_2$, & \text{weight 2}\\
$f_5=3x_2^2-4x_1x_3+x_0x_4$ & \text{weight 4}
\end{tabular}
\end{center}

The weights listed are those with respect to the $\C^*$-action described in Remark \ref{remark:GIT}:
$$t\colon [x_0:x_1:x_2:x_3:x_4]\mapsto[x_0:tx_1:t^2x_2:t^3x_3:t^4x_4].$$
on $\P^4$, with respect to which $C_4$ is invariant. We note the Hessian determinant of $Q$:
$$-2(s_0s_4 - s_1s_3 + s_2^2 + 2s_2s_5 - 3s_5^2)(4s_0s_2s_4 - s_0s_3^2 - 4s_0s_4s_5 - s_1^2s_4 + 4s_1s_3s_5 - 16s_2s_5^2 + 16s_5^3).$$

\begin{theorem}
    \label{theorem:classification}
There is $\phi\in\mathrm{SL}_2(\mathbb{C})$ such that one of the following cases holds:
\begin{enumerate}
\item[$(\mathrm{1})$] $\phi^*(Q)=\{\mu(f_0+f_4)+\lambda f_2+f_5=0\}$ for some $\lambda,\mu\in\mathbb{C}$ such that
$$(\mu - 2)(\mu + 2)(\lambda - 1)(\lambda^2 + \mu^2 + 2\lambda - 3)\neq0,$$
\item[$(\mathrm{2})$] $\phi^*(Q)=\{f_0+\lambda f_2+f_5=0\}$ for some $\lambda\in\mathbb{C}\setminus\{1,-3\}$,
\item[$(\mathrm{3})$] $\phi^*(Q)=\{f_1+f_5=0\}$.
\end{enumerate}
\end{theorem}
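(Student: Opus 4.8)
The plan is to exploit the $\mathrm{SL}_2(\C)$-representation theory of the six-dimensional space $W=\langle f_0,\dots,f_5\rangle$ of quadrics through $C_4$. Since $\{f_5=0\}$ is the unique invariant quadric, $V_0:=\langle f_5\rangle$ is an invariant line; and since $W$ is the kernel of the evaluation map $\mathrm{Sym}^2(\mathrm{Sym}^4)\twoheadrightarrow H^0(C_4,\mathcal{O}_{C_4}(2))=\mathrm{Sym}^8$ while $\mathrm{Sym}^2(\mathrm{Sym}^4)=\mathrm{Sym}^8\oplus\mathrm{Sym}^4\oplus\mathrm{Sym}^0$, we get $W\cong\mathrm{Sym}^4\oplus\mathrm{Sym}^0$ as $\mathrm{SL}_2(\C)$-modules. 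Comparing $\C^*$-weights (the weight spaces of nonzero weight must lie in the $\mathrm{Sym}^4$-summand) and a short computation with one unipotent one-parameter subgroup identify the invariant complement of $V_0$ as $V_4:=\langle f_0,\,f_1,\,3f_2+f_5,\,f_3,\,f_4\rangle$; this is where the coefficient $3$ throughout the statement comes from. Putting $g_2:=3f_2+f_5$ and $g_i:=f_i$ otherwise, fix an $\mathrm{SL}_2(\C)$-equivariant isomorphism $V_4\xrightarrow{\ \sim\ }\C[u,v]_4$; being equivariant it sends the weight vector $g_i$ to a nonzero multiple of $u^iv^{4-i}$. A quadric $Q$ then becomes a point $[\phi:c]\in\P(V_4\oplus V_0)$, and $[\phi:c]\sim[\phi':c']$ under $\mathrm{SL}_2(\C)$ precisely when $g\phi=t\phi'$ and $c=tc'$ for some $g\in\mathrm{SL}_2(\C)$ and $t\in\C^*$.

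Next I would feed in the classical classification of $\mathrm{SL}_2(\C)$-orbits on $\P(\C[u,v]_4)$ by root multiplicity: up to scalar and $\mathrm{SL}_2(\C)$, the binary quartic $\phi$ is either $0$, or of type $(4)$, $(3,1)$, $(2,2)$, $(2,1,1)$, or $(1,1,1,1)$; the first four nonzero types are single orbits, while $(1,1,1,1)$ is a one-parameter family with normal form $a(u^4+v^4)+b\,u^2v^2$ (reached by moving an involution of the four roots to $v\mapsto -v$ and rescaling). Since all quartics of a given type are equivalent, I may take as representatives $g_0$, $g_1$, $g_2$, $g_0+g_2$ and $a(g_0+g_4)+b\,g_2$: indeed $g_0+g_2$ corresponds to a quartic of the form $v^2\cdot q(u,v)$ with $q$ having two simple roots, and $g_0+g_4$ to a harmonic quartic, so these genuinely represent types $(2,1,1)$ and $(1,1,1,1)$—and no knowledge of the scaling constants is needed. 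In terms of $f_0,\dots,f_5$ these representatives are $f_0$, $f_1$, $3f_2+f_5$, $f_0+3f_2+f_5$, and $a(f_0+f_4)+b(3f_2+f_5)$.

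For each type the residual freedom in the constant $c$ is governed by the scalar stabilizer $\Sigma(\phi_0)=\{\,t\in\C^*: g\phi_0=t\phi_0 \text{ for some } g\in\mathrm{SL}_2(\C)\,\}$, acting by $c\mapsto c/t$. For types $(4)$ and $(3,1)$ the stabilizer of $[\phi_0]$ in $\mathrm{PGL}_2(\C)$ contains a $\Gm$ that scales $\phi_0$ by every element of $\C^*$, so $\Sigma(\phi_0)=\C^*$ and there are two orbits: $c=0$, giving the singular quadrics $\{f_0=0\}$ and $\{f_1=0\}$, and $c\neq0$, giving (after rescaling $c$ to $1$) cases $(3)$ and $(5)$. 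For types $(2,2)$, $(2,1,1)$ and $(1,1,1,1)$ the stabilizer of $[\phi_0]$ (finite, or $\Gm\rtimes\mumu_2$) fixes $\phi_0$ up to at most a finite group of scalars, so $c$ descends to a free parameter (defined up to that finite group, which is harmless since the statement asks only for \emph{some} parameter value); rescaling so that the $f_2$-coefficient equals $3$—or so that the $f_5$-coefficient equals $1$ when no $f_2$-term is present—produces exactly the families of case $(2)$ with $\mu=0$, case $(4)$, and case $(2)$ with $\mu\neq0$ or case $(1)$. Together with $\phi=0$, which is case $(1)$ with $\mu=0$, this shows every quadric through $C_4$ is $\mathrm{SL}_2(\C)$-equivalent to a member of one of the five families. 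Finally, to match the parameter restrictions I would substitute each normal form into the two factors of the Hessian determinant recorded just before the theorem: this returns $(\mu^2-3)(\mu^2-4)$ in case $(1)$, $(3\lambda^2-\mu^2-6\lambda-9)(\lambda-3)(2\lambda-\mu)(2\lambda+\mu)$ in case $(2)$, $\lambda^2(\lambda+1)(\lambda-3)^2$ in case $(4)$, and nonzero constants in cases $(3)$ and $(5)$; since $Q$ is smooth its parameter must lie off this vanishing locus, which is precisely the excluded set.

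The main obstacle is the dictionary established in the first paragraph—correctly pinning down the invariant complement $V_4$, in particular the combination $3f_2+f_5$ (which should be computed from a unipotent/lowering-operator action, not guessed), together with the equivariant identification of $V_4$ with binary quartics—so that the abstract root-type normal forms translate into precisely the $f_i$-combinations appearing in the statement. After that, the orbit list is classical and the smoothness bookkeeping is a one-line substitution per family. (One could instead quote Remark~\ref{remark:GIT} for the non-stable orbits, which are cases $(3)$, $(4)$, $(5)$ together with the $\mu=0$ members of cases $(1)$ and $(2)$, and treat the GIT-stable locus separately, but that still requires the binary-quartic normal forms above.)
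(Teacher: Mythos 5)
Your proposal is correct, but it takes a genuinely different route from the paper. The paper argues by direct coefficient elimination: it applies explicit unipotent and two-parameter $\mathrm{SL}_2(\mathbb{C})$ matrices to kill $s_1$ and then $s_3$, splitting off the degenerate situations ($s_0=s_1=0$, or the vanishing of the auxiliary quantity $h$, detected via a resultant computation with $\mathrm{Res}(g_1,g_2)=h^3$) and reducing those to the normal forms of cases $(3)$--$(5)$; the smoothness restrictions come from the same Hessian factorisation you use. You instead work representation-theoretically: the decomposition $W\cong\mathrm{Sym}^4\oplus\mathrm{Sym}^0$, the classical orbit classification of binary quartics by root multiplicities, and stabilizer bookkeeping for the $f_5$-coefficient. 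Your dictionary is right at the two points that need checking: the weight-zero vector of the $\mathrm{Sym}^4$-summand is indeed proportional to $3f_2+f_5$ (e.g.\ applying the appropriate raising operator twice to the extremal weight vector $f_4$ gives $2f_3$ and then $3f_2-2x_1x_3$-type expression equal to $\tfrac12(3f_2+f_5)$ up to scale), and the smoothness exclusions you list match the Hessian factors exactly (I checked cases $(1)$, $(2)$ and $(4)$ give $(\mu^2-3)(\mu^2-4)$, $(3\lambda^2-\mu^2-6\lambda-9)(\lambda-3)(4\lambda^2-\mu^2)$ and $\lambda^2(\lambda+1)(\lambda-3)$ up to nonzero constants). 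The one step you should make explicit is in the $(1,1,1,1)$ case: because the equivariant identification sends $g_0,g_4$ to \emph{unknown} nonzero multiples of $v^4,u^4$, reaching the representative $a(g_0+g_4)+bg_2$ requires acting by a diagonal element $\mathrm{diag}(s,s^{-1})$, which rescales the $u^4$- and $v^4$-coefficients by reciprocal factors while fixing $u^2v^2$, so the ratio can be normalised at will; your phrase ``and rescaling'' covers this but deserves a sentence. What each approach buys: yours explains conceptually where the constant $3$ in $3f_2+f_5$ and the five normal forms come from (they are exactly the root types of the quartic part, matching the GIT picture of Remark~\ref{remark:GIT}) and avoids resultants, while the paper's elimination is elementary, self-contained, and produces the transforming matrices explicitly.
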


Let's prove this theorem. Recall from Section 1 that we fixed a faithful $\text{SL}_2(\C)$-action on the $\P^4=\P(\text{Sym}^4(\C^2))$, which is given explicitly by the following injection $\text{SL}_2(\C)\hookrightarrow\text{SL}_5(\C)$:
$$
\begin{pmatrix}a & b \\ c & d\end{pmatrix}\mapsto\begin{pmatrix}
a^4 & 4a^3b & 6a^2b^2 & 4ab^3 & b^4 \\
a^3c & a^3d + 3a^2bc & 3a^2bd + 3ab^2c & 3ab^2d + b^3c & b^3d \\
a^2c^2 & 2a^2cd + 2abc^2 & a^2d^2 + 4abcd + b^2c^2 & 2abd^2 + 2b^2cd & b^2d^2 \\
ac^3 & 3ac^2d + bc^3 & 3acd^2 + 3bc^2d & ad^3 + 3bcd^2 & bd^3 \\
c^4 & 4c^3d & 6c^2d^2 & 4cd^3 & d^4 \\
\end{pmatrix}.
$$
So let $\phi\in \text{SL}_2(\C)$.  Then $\phi^*(Q)$ is given by
$$
s_0'f_0+s_1'f_1+s_2'f_2+s_3'f_3+s_4'f_4+s_5'f_5=0,
$$
for some $[s_0':s_1':s_2':s_3':s_4':s_5']\in \P^5$. \\

Throughout the proof we will frequently use the fact that if two of $s_i,s_j$ are nonzero and correspond to polynomials of different weights, then we can act by $\C^*$ to transform $Q$ into one with $s_i'=s_j'$. We will also use the involution
$$\iota\colon[x_0:x_1:x_2:x_3:x_4]\mapsto[x_4:x_3:x_2:x_1:x_0],$$
which $C_4$ is invariant with respect to. \\ 

We will show that \textit{in general} there exists a $\phi\in\mathrm{SL}_2(\mathbb{C})$ such that $\phi^*(Q)$ is in the form of case (1). Our strategy will be to find $\phi$ such that $s_1'=s_3'=0$. Then if $s_0'=s_4'=0$ or $s_0',s_4'\neq0$, then we can act by $\C^*$ to make $s_0'=s_4'$, so we are in case (1). If on the other hand precisely one of $s_0',s_4'$ is nonzero -- and without loss of generality we may assume it's $s_0'$ -- then we can act by $\C^*$ to make $s_0'=s_5'$, so we get case (2). If such a $\phi$ does not exist, then we show that case (3) holds. In each of these cases, smoothness forces $s_5'\neq0$, so we may assume $s_5'=1$. Thus, the statement is proved.\\ 

The above statement follows from the next three lemmas. \\

If $s_1=s_3=0$ then we're done, so we may assume that $s_1$ is nonzero (remember we can act by $\iota$).

\begin{lemma}
    Suppose that $s_1$ is nonzero. If $s_0=s_2=s_3=s_4=0$, then there exists $\phi\in\mathrm{SL}_2(\mathbb{C})$ such that
    $$\phi^*(Q)=\{f_1+f_5=0\}.$$
    On the other hand if one of $s_0,s_2,s_3,s_4$ is nonzero, then there exists $\phi\in\mathrm{SL}_2(\mathbb{C})$ such that
    $$\phi^*(Q)=\{s_0f_0+s_2f_2+s_3f_3+s_4f_4+s_5f_5=0\}.$$
\end{lemma}

\begin{proof}
If $s_0=s_2=s_3=s_4=0$, then smoothness forces $s_5\neq0$. Thus, we may assume $s_5=1$ and then act by $\C^*$ to transform the equation of $Q$ into $f_1+f_5=0$.

If not, then by can act by 
$$
\begin{pmatrix}
1 & b \\
0 & 1
\end{pmatrix}.$$
Then we have that
$$s_1\mapsto2s_4b^3 + 3s_3b^2 + 4bs_2 + s_1,$$
so that if not all of $s_2,s_3,s_4$ are zero then we can choose any root $b$ of $2s_4b^3 + 3s_3b^2 + 4bs_2 + s_1=0$, so that the equation of $Q$ is transformed into the required form. On the other hand if $s_2=s_3=s_4=0$, then we can act by 
$$
\begin{pmatrix}
1 & 0 \\
c & 1
\end{pmatrix}.
$$
This transforms $s_1\mapsto s_1 + 2cs_0$, and since $s_0\neq0$ by assumption, so we can choose $c=-s_1/2s_0$ to transform the equation of $Q$ into the required form.
\end{proof}
Hence to prove Theorem~\ref{theorem:classification}, we may assume that $s_1=0$. We want to show that there exists $\phi\in\mathrm{SL}_2(\mathbb{C})$ such that $$\phi^*(Q)=\{s_0f_0+s_2f_2+s_4f_4+s_5f_5=0\}.$$
If $s_3=0$ then we're done, so we may assume $s_3\neq 0$. 

\begin{lemma}
\label{lemma:12}
    Suppose that that $s_0=s_1=0$, and $s_3\neq0$. Then there exists $\phi\in\mathrm{SL}_2(\mathbb{C})$ such that $\phi^*(Q)$ is given by one of the following equations:
\begin{itemize}
    \item $f_0+\lambda f_2+ f_5=0$, for $\lambda\notin\{1,-3\}$,  
    \item $f_1+f_5=0$
\end{itemize}
\end{lemma}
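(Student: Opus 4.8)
The plan is to exploit that, once the defining polynomial is rescaled, the point $[s_0:\dots:s_5]$ decomposes under the $\mathrm{SL}_2(\mathbb{C})$-action into the binary quartic $(s_0,\dots,s_4)$, on which $\mathrm{SL}_2(\mathbb{C})$ acts in the standard way, together with the coordinate $s_5$, which $\mathrm{SL}_2(\mathbb{C})$ fixes; hence the $\mathrm{SL}_2(\mathbb{C})$-orbit of $Q$ is governed by the root type of $(s_0,\dots,s_4)$ and the value of $s_5$. First I would read off from smoothness that $s_5\neq0$: setting $s_0=s_1=0$ in the Hessian determinant of $Q$ recorded above, its two factors become $s_2^2+2s_2s_5-3s_5^2=(s_2-s_5)(s_2+3s_5)$ and $16s_5^2(s_5-s_2)$, so the Hessian equals $32\,s_5^2(s_2-s_5)^2(s_2+3s_5)$, which is nonzero as $Q$ is smooth; in particular $s_5\neq0$. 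The other basic input is that acting by $\begin{pmatrix}1&0\\c&1\end{pmatrix}$ fixes $s_5$, preserves $s_0=s_1=0$, fixes $s_2$, and sends $s_3\mapsto s_3+4cs_2$ and $s_4\mapsto s_4+2cs_3+4c^2s_2$ (these rules follow from the standard action on binary quartics, just as the rule for $s_1'$ under $\begin{pmatrix}1&b\\0&1\end{pmatrix}$ was used in the previous lemma). I would then split according to whether $s_2=0$.

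Suppose $s_2\neq0$. Choosing $c=-\tfrac{s_3}{4s_2}$ makes the new $s_3$ equal to $0$, so after applying this $\phi$ the quadric has coefficient vector $(0,0,s_2,0,s_4',s_5)$ with $s_4'=s_4-\tfrac{s_3^2}{4s_2}$. If $s_4'\neq0$, applying the involution $\iota$ (which reverses $s_0,\dots,s_4$ and fixes $s_5$) yields a quadric $\{s_4'f_0+s_2f_2+s_5f_5=0\}$ with $s_4',s_2,s_5$ all nonzero; since $f_0$ and $f_2$ lie in different $\C^*$-weight spaces, a further action by $\C^*$, followed by rescaling the equation, brings it to $\{f_0+3f_2+\lambda f_5=0\}$. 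This quadric is projectively equivalent to the smooth quadric $Q$, hence smooth, and its Hessian determinant is a nonzero multiple of $\lambda^2(\lambda-3)^2(\lambda+1)$, so $\lambda\notin\{0,-1,3\}$ --- the second possibility in the statement. If instead $s_4'=0$, the quadric is $\{s_2f_2+s_5f_5=0\}$, which has $s_1'=s_3'=0$; rescaling by $\C^*$ brings it to $\{3f_2+\lambda f_5=0\}$, and smoothness again forces $\lambda\notin\{0,-1,3\}$.

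Suppose $s_2=0$. Then the quadric has coefficient vector $(0,0,0,s_3,s_4,s_5)$ with $s_3\neq0$, and the binary quartic $(0,0,0,s_3,s_4)$ is a cube times a distinct linear form. Acting by $\begin{pmatrix}1&0\\c&1\end{pmatrix}$ with $c=-\tfrac{s_4}{2s_3}$ kills $s_4$ while leaving $s_0=s_1=s_2=0$ and $s_3$ unchanged, giving $\{s_3f_3+s_5f_5=0\}$; applying $\iota$ turns this into $\{s_3f_1+s_5f_5=0\}$, and since $f_1$ and $f_5$ have different $\C^*$-weights, acting by $\C^*$ and rescaling produces $\{f_1+f_5=0\}$ --- the third possibility in the statement.

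The transformation formulas for the $s_i$ and the two Hessian computations are the bulk of the work but are mechanical. The step needing care is the $s_2\neq0$ branch: one must verify that normalising $s_3$ to $0$ preserves $s_0=s_1=0$ and $s_5$, locate exactly when the residual coefficient $s_4'$ vanishes (the locus $4s_2s_4=s_3^2$, where the binary quartic acquires a second double root), and correctly place the quadric in that degenerate sub-case. In fact that sub-case produces $\{3f_2+\lambda f_5=0\}$, which is case (2) of Theorem~\ref{theorem:classification} with $\mu=0$ rather than one of the three forms listed here, while conversely the first form $\{f_0+f_5=0\}$ never arises under these hypotheses --- it would require $(s_0,\dots,s_4)$ to be a fourth power, forcing $s_3=0$; so I would revisit whether the list of conclusions is meant to read case (2), case (4), case (5), or whether some further hypothesis excludes the degenerate sub-case.
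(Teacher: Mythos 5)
Your computations check out, and your route is essentially the paper's: normalise with a unipotent element, the involution $\iota$ and the $\C^*$-action, and read the smoothness constraints off the Hessian; the only real difference is that you act by the lower-triangular unipotent directly on $(s_2,s_3,s_4)$, whereas the paper first scales $s_3=s_5=1$, applies $\iota$, and then uses the upper-triangular unipotent, which is the conjugate manoeuvre. One caveat about your opening framing: the splitting you invoke is not $\mathrm{SL}_2(\C)$-equivariant as stated. The invariant complement of $\langle f_5\rangle$ in the degree-two part of the ideal is spanned by $f_0,f_1,3f_2+f_5,f_3,f_4$, not by $f_0,\dots,f_4$; for example the upper unipotent with parameter $b$ pulls $f_4$ back to $f_4+2bf_3+b^2\big(\tfrac32 f_2+\tfrac12 f_5\big)+2b^3f_1+b^4f_0$, so $s_5$ is not fixed in general. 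Your argument is unharmed because the specific rules you use are correct (for the lower unipotent one has $f_2\mapsto f_2+4cf_3+4c^2f_4$, $f_3\mapsto f_3+2cf_4$, $f_4\mapsto f_4$, $f_5\mapsto f_5$, so on the locus $s_0=s_1=0$ your formulas for $s_3',s_4'$ and the invariance of $s_2,s_5$ hold), and because the true quartic component $(s_0,s_1,s_2/3,s_3,s_4)$ differs from $(s_0,\dots,s_4)$ only in the middle entry, so your ``fourth power forces $s_3=0$'' conclusion for $\{f_0+f_5=0\}$ still stands.

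Your closing observation is correct and points at a genuine defect in the statement of Lemma~\ref{lemma:12}, which is shared by the paper's own proof: at the step where $b=-1/4s_2$ makes $s_1'=s_3'=s_4'=0$, the paper silently assumes that the resulting $f_0$-coefficient $s_0-\tfrac{1}{4s_2}$ is nonzero before normalising to $f_0+3f_2+\lambda f_5=0$. On the locus $4s_2s_4=s_3^2$ the outcome is instead $\{3f_2+\lambda f_5=0\}$, i.e.\ case $(2)$ of Theorem~\ref{theorem:classification} with $\mu=0$, and this genuinely occurs under the hypotheses of the lemma: pulling back $\{3f_2+\lambda f_5=0\}$ by the lower unipotent with $c\neq0$ gives the smooth quadric $\{3f_2+12cf_3+12c^2f_4+\lambda f_5=0\}$, which has $s_0=s_1=0$ and $s_3\neq0$ but cannot be carried to any of the three listed forms, since its stabiliser is $\Gm\rtimes\mumu_2$ by Remark~\ref{remark:GIT}, while the listed forms have stabiliser $\Ga\times\mumu_2$ or a finite group. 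Likewise $\{f_0+f_5=0\}$ never arises under these hypotheses, and indeed the paper's proof never produces it. So the list in the lemma should either include $\{3f_2+\lambda f_5=0\}$ or carry a hypothesis excluding $4s_2s_4=s_3^2$; this does not affect Theorem~\ref{theorem:classification} or the K-stability results, since the proof of the theorem already routes the degenerate possibility into case $(2)$ with $\mu=0$, but as literally stated the lemma is not correct, and what you have proved is the corrected version.
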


\begin{proof}
Note that the Hessian determinant of the equation of $Q$ is 
$$32s_5^2(3s_5 + s_2)(s_2 - s_5)^2,$$
so that whenever the equation satisfies $s_0=s_1=0$ then for $Q$ to be smooth we must have $s_5\neq0$, and $s_2\notin\{ -3s_5,s_5\}$. \\

Since $s_3\neq0$ by assumption, we can act by $\C^*$ and rescale to make $s_3=s_5=1$. We then can act by the involution $\iota$ to put the equation of $Q$ into the form
$$
s_0f_0+f_1+s_2f_2+f_5=0.
$$

Now let us find $\phi$ such that $s_1'=s_3'=s_4'=0.$ We act by the matrix
$$
\begin{pmatrix}
    1 & b \\
    0 & 1
\end{pmatrix},
$$
so that then $s_1'=4bs_2+1$. Then if $s_2\neq0$, a choice of $b=-1/4s_2$ makes $s_1'=s_3'=s_4'=0$, and so $\phi$ puts $Q$ into the form 

$$f_0+\lambda f_2+ f_5=0,$$
which is smooth if and only if $\lambda\notin\{1,-3\}$.\\

 If instead $s_2=0$ then a choice of $b=-s_0/2$ makes $s_0'=0$, and then $\phi$ puts $Q$ into the form:

$$f_1+f_5=0.$$
\end{proof}

Recall that we may assume $s_1=0$, and we are trying to find $\phi\in\text{SL}_2(\C)$ such that the equation of $\phi^*(Q)$ has $s_1'=s_3'=0$.

The case $s_0=0$ has been classified in Lemma~\ref{lemma:12}, so we may assume $s_0\neq0$. Furthermore if $s_3=0$ then we are done, so we may assume $s_3\neq0$. The following lemma completes the proof of Theorem \ref{theorem:classification}.

\begin{lemma}
    Suppose $s_1=0$, and $s_0,s_3\neq0$, and furthermore that the quantity 
    $$
h=256s_2^4s_4 - 128s_2^2s_4^2 + 64s_2^3 + 16s_4^3 - 144s_2s_4 - 27
$$ 
is nonzero. Then there exists $\phi\in\mathrm{SL}_2(\C)$ such that $s_1'=s_3'=0$. Conversely if $h=0$, then there exists $\phi\in\mathrm{SL}_2(\C)$ such that $\phi^*(Q)$ satisfies the conditions of Lemma~\ref{lemma:12}: $s_0'=s_1'=0$ and $s_3'\neq0$.
\end{lemma}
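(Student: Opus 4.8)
The plan is to recognise that, after a harmless rescaling, the tuple $(s_0,s_1,s_2,s_3,s_4)$ is, up to fixed nonzero constants in a convenient basis, the coefficient vector of a binary quartic form $g$ on which $\mathrm{SL}_2(\mathbb{C})$ acts in the standard way (the coefficient $s_5$ of the invariant quadric $f_5$ being merely rescaled), and that $h$ is, up to a nonzero scalar, the discriminant of $g$. Concretely, since $s_0,s_3\neq0$ and $f_0,f_3$ have different weights we may combine the $\C^*$-action with the overall scaling of the equation of $Q$ to arrange $s_0=s_3=1$; then $g$ is the quartic with coefficients $(1,0,s_2,1,s_4)$ (up to the basis constants), and under this dictionary the requirement ``$\exists\,\phi$ with $s_1'=s_3'=0$'' becomes ``$g$ can be carried by $\mathrm{SL}_2(\mathbb{C})$ into the \emph{biquadratic} shape $\alpha u^4+\beta u^2v^2+\gamma v^4$'', that is, into a form whose four roots are invariant under the involution $[u:v]\mapsto[-u:v]$ of $\mathbb{P}^1$.

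Assume first $h\neq0$, so $g$ has four distinct roots $P_1,P_2,P_3,P_4\in\mathbb{P}^1$. I would use the standard fact that any four distinct points of $\mathbb{P}^1$ are interchanged in pairs by an involution $\sigma\in\mathrm{PGL}_2(\mathbb{C})$: normalising $P_1,P_2$ to $0,\infty$, take $\sigma\colon z\mapsto P_3P_4/z$, which swaps $0\leftrightarrow\infty$ and $P_3\leftrightarrow P_4$ and whose fixed points $\pm\sqrt{P_3P_4}$ are automatically distinct from all four $P_i$. Conjugating $\sigma$ to $[u:v]\mapsto[-u:v]$ by an element $\phi\in\mathrm{SL}_2(\mathbb{C})$ (lifting the conjugating element of $\mathrm{PGL}_2$), the roots of $\phi\cdot g$ split into two antipodal pairs $\{\pm q',\pm r'\}$ with $q',r'\notin\{0,\infty\}$, whence $\phi\cdot g\propto(u^2-q'^2v^2)(u^2-r'^2v^2)$ is biquadratic; equivalently $s_1'(\phi)=s_3'(\phi)=0$, as required. (A subsequent torus twist puts $\phi^*(Q)$ into case $(1)$ or $(2)$ of Theorem~\ref{theorem:classification}, but this is not needed for the lemma.)

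Now assume $h=0$, so $g$ has a repeated root; since $s_0=1\neq0$ the point $[1:0]$ is not a root of $g$, hence the repeated root is $[p:1]$ for some $p\in\mathbb{C}$. Applying the element of $\mathrm{SL}_2(\mathbb{C})$ carrying $[p:1]$ to $[1:0]$ (for instance $[u:v]\mapsto[v:\,-u+pv]$) we obtain $\phi$ with a root of multiplicity $\geq2$ at $[1:0]$, i.e. $s_0'=s_1'=0$. If already $s_3'\neq0$ we are in the hypotheses of Lemma~\ref{lemma:12}; otherwise we act by a further upper-triangular unipotent $\begin{pmatrix}1&b\\0&1\end{pmatrix}$, which fixes $[1:0]$ and hence preserves $s_0'=s_1'=0$, and a brief computation — using that $g$ is not the fourth power of a linear form when $s_0=s_3=1$ and $s_1=0$ — shows that a general choice of $b$ makes $s_3'\neq0$; again Lemma~\ref{lemma:12} applies.

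The step I expect to cost the most effort is setting up and justifying the dictionary above, in particular the identity ``$h=\operatorname{disc}(g)$ up to a nonzero constant'', which requires the explicit change of basis between $f_0,\dots,f_4$ and a weight basis of the $\mathrm{Sym}^4$-summand of the space of quadrics through $C_4$ — note that $\langle f_0,\dots,f_4\rangle$ is \emph{not} $\mathrm{SL}_2(\mathbb{C})$-invariant, since acting on $f_3$ introduces an $f_5$-term, reflecting that the weight-zero vector of the $\mathrm{Sym}^4$-summand is $3f_2+f_5$ rather than $f_2$. An alternative, more in the spirit of the preceding lemmas, is to dispense with binary quartics entirely: act by $\begin{pmatrix}1&b\\0&1\end{pmatrix}$ (and, if needed, a lower-triangular factor), read off $s_0'(b),s_1'(b),s_3'(b)$ from the explicit $5\times5$ matrix, impose $s_1'=s_3'=0$, and eliminate $b$ — the obstruction to a solution being exactly $h$, with the degenerate case $h=0$ instead forcing a common root of $s_0'(b)$ and $s_1'(b)$, which is the Lemma~\ref{lemma:12} situation. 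Either way one must also dispose of the small degenerate sub-configurations ($s_2=0$, $s_4=0$, and so on), which are routine.
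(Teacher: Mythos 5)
Your argument is correct, but it follows a genuinely different route from the paper's. The paper proceeds by direct elimination: it applies the explicit matrix with entries $1,b,c,1+bc$, solves $s_1'=0$ for $b$, substitutes into $s_3'$ to obtain $s_3'=g_1(c)/g_2(c)^2$, and uses the resultant identity $\mathrm{Res}_c(g_1,g_2)=h^3$ to conclude that for $h\neq 0$ any root of $g_1$ gives $s_1'=s_3'=0$; for $h=0$ it instead makes $s_3'=s_4'=0$ (the relevant resultant being exactly $h$) and composes with $\iota$ to reach the hypotheses of Lemma~\ref{lemma:12}. You instead use the equivariant decomposition of the space of quadrics through $C_4$ as $\mathrm{Sym}^4\oplus\C f_5$ and reduce everything to the classical facts that a binary quartic with distinct roots can be brought into biquadratic form (four distinct points of $\P^1$ admit a pair-swapping involution, whose fixed points avoid the four points) and that a quartic with vanishing discriminant has a repeated root which can be parked at a coordinate point. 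The identity you deferred, that $h$ is $\operatorname{disc}(g)$ up to a nonzero constant, is indeed true: applying the raising operator $4x_1\partial_{x_0}+3x_2\partial_{x_1}+2x_3\partial_{x_2}+x_4\partial_{x_3}$ shows that the weight basis of the $\mathrm{Sym}^4$-summand is $f_4,\,f_3,\,3f_2+f_5,\,f_1,\,f_0$ (confirming your claim about $3f_2+f_5$), and an intertwiner sends these to $u^4,\,2u^3v,\,12u^2v^2,\,2uv^3,\,v^4$, so the quartic attached to $\sum s_if_i$ is $g=s_4u^4+2s_3u^3v+4s_2u^2v^2+2s_1uv^3+s_0v^4$; at $s_0=s_3=1$, $s_1=0$ one checks $\operatorname{disc}(g)=16h$, since its invariants $I$, $J$ agree (up to the sign of $J$) with those of the paper's auxiliary quartic $c^4-4s_2c^2+2c+s_4$, whose discriminant is $16h$. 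With this supplied, both of your cases go through; your treatment of $h=0$ is in fact slightly more complete than the paper's, since you actually arrange $s_3'\neq0$ (via a unipotent fixing the double root, using that $g$ cannot be a fourth power of a linear form), whereas the paper only exhibits $s_0'=s_1'=0$. What the paper's computation buys is a short, machine-checkable argument with no representation theory; what your route buys is a conceptual explanation of $h$ as the discriminant of the $\mathrm{Sym}^4$-component, which also ties in naturally with the GIT picture of Remark~\ref{remark:GIT}.
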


\begin{proof}
Firstly, we can act by $\C^*$ and scale to make $s_0=s_3=1$. The equation of $Q$ is then 
$$f_0+s_2f_2+f_3+s_4f_4+s_5f_5=0.$$
We act by the $\mathrm{SL}_2(\mathbb{C})$ matrix
$$
\begin{pmatrix}
1 & b \\
c & 1+bc
\end{pmatrix},
$$
then we have that
\begin{align*}
    s_1'&=(2c^4 - 8s_2c^2 + 4c + 2s_4)b + 2c^3   - 4s_2c  + 1 \\
    s_3'&=(2c^4 - 8s_2c^2 + 4c + 2s_4)b^3 + (6c^3 - 12cs_2 + 3)b^2 + (6c^2 - 4s_2)b + 2c.
\end{align*}
We then substitute the quantity
$$b=\frac{-(2c^3   - 4s_2c  + 1)}{2c^4 - 8s_2c^2 + 4c + 2s_4}$$
into $s_3'$ to get:
$$
s_3'= \frac{g_1}{g_2^2},
$$
where
\begin{align*}
g_1&=2c^6 + (-16s_2^2 + 4s_4)c^5 + 20c^4s_2 - 10c^3 - 10c^2s_4 + (16s_2^2s_4 - 4s_4^2 + 4s_2)c - 4s_2s_4 - 1, \\ 
g_2&=2c^4 - 8s_2c^2 + 4c + 2s_4.
\end{align*}

Thus, there exists $\phi\in\text{SL}_2(\C)$ such that the equation of $\phi^*(Q)$ has $s_1'=s_3'=0$ if we can find $c\in\C$ such that $g_1=0$ but $g_2\neq0$. Observe that the resultant of $g_1,g_2$ with respect to $c$ is equal to $h^3$. Thus if $h\neq0$, then any root $c$ of $g_1$ will do.\\

Now if $h=0$, we will now show that there exists $\phi\in\text{SL}_2(\C)$ such that the equation of $\phi^*(Q)$ satisfies the conditions of Lemma~\ref{lemma:12}. We act by the matrix
$$
\begin{pmatrix}
1 & b \\
c & 1+bc
\end{pmatrix}.
$$
Then we have that
\begin{align*}
s_3'&=2bc^4 - 8bc^2s_2 + 2c^3 + 4bc + 2bs_4 - 4cs_2 + 1 \\
s_4'&=c^4 - 4c^2s_2 + 2c + s_4.
\end{align*}
The resultant of $s_4'$ and $s_3'-2bs_4'$ with respect to $c$ is
$$256s_2^4s_4 - 128s_2^2s_4^2 + 64s_2^3 + 16s_4^3 - 144s_2s_4 - 27,$$
which is equal to $h$. This was assumed to be zero, so that there exists a common root of these two polynomials. Thus, we can make $s_3'$ and $s_4'$ zero simultaneously, and so by composing the above matrix transformation with $\iota$, we have that $Q$ can be put into the form $s_0'=s_1'=0$. 
\end{proof}

\section{Proof of the Technical lemmas}
\label{section:technical}

We stick to the notations introduced earlier in Section \ref{section:basic-properties}.
\begin{lemma}
\label{lemma:technical1}
Let $P$ be a point in $X$ which is not contained in $ E\cup E^\prime$. Then $\delta_P(X)\geqslant\frac{112}{111}$.
\end{lemma}

\begin{proof}
Let us apply results presented in Section~\ref{section:Abban-Zhuang} to do this.
Let $S$ be a general surface in $|H|$ which contains $P$.
Set
$$
\tau=\mathrm{sup}\Big\{u\in\mathbb{R}_{\geqslant 0}\ \big\vert\ \text{the divisor  $-K_X-uS$ is pseudo-effective}\Big\}.
$$

For $u\leqslant\frac{3}{2}$, let $P(u)$ be the~positive part of the~Zariski decomposition of the~divisor $-K_X-uS$,
and let $N(u)$ be the negative part of the Zariski decomposition of this divisor. Fix $u\in\mathbb{R}_{\geqslant 0}$. Then
$$
-K_X-uS\sim_{\mathbb{R}}(3-u)H-E\sim_{\mathbb{R}}\frac{1}{2}E^\prime+\frac{3-2u}{2}H,
$$
which shows that $\tau=\frac{3}{2}$.

Let $\mathbf{f}$ (resp. $\mathbf{f'}$) be a fibre of the contraction morphism $\pi$ (resp. $\pi'$) over a general point of $C_4$ (resp. $C_4'$). Then the Mori cone $\overline{\text{NE}}(X)$ of $X$ is generated the numerical equivalence classes of $\mathbf{f}$ and $\mathbf{f'}$. Recall from Section~\ref{section:basic-properties} that we have the following intersection numbers on $X$:
$$
    E \cdot \mathbf{f}=-1,\quad    \pi^*D\cdot\mathbf{f}=0,
$$
for all divisors $D\in\text{Pic}(Q).$ So $H\cdot\mathbf{f}=0$ in particular. Then we have:
$$
    (-K_X-uS)\cdot\mathbf{f}=1,\quad    (-K_X-uS)\cdot\mathbf{f'}=1-u.
$$
So $-K_X-uS$ is nef for $0\leqslant u \leqslant 1$. Thus, we have
$$
P(u)\sim_{\R}
\left\{\aligned
&(3-u)H-E \ \text{if $0\leqslant u\leqslant 1$}, \\
&(3-2u)(2H-E)\ \text{if $1\leqslant u\leqslant \frac{3}{2}$},
\endaligned
\right.
$$
and
$$
N(u)=\left\{\aligned
&0 \ \text{if $0\leqslant u\leqslant 1$}, \\
&(u-1)E^\prime\ \text{if $1\leqslant u\leqslant \frac{3}{2}$},
\endaligned
\right.
$$
and so:
\begin{multline*}
S_X(S)=\frac{1}{28}\int_{0}^{\frac{3}{2}}\big(P(u)\big)^3du=\frac{1}{28}\int_{0}^{1}\big((3-u)H-E\big)^3du+\frac{1}{28}\int_{1}^{\frac{3}{2}}\big((3-2u)(2H-E)\big)^3du=\\
=\frac{1}{28}\int_{0}^{1}28-2u^3+18u^2-42udu+\frac{1}{28}\int_{1}^{\frac{3}{2}}2(3-2u)^3du=\frac{51}{112}.\quad\quad\quad\quad\quad
\end{multline*}

By Lemma~\ref{lemma:del-Pezzo}, $S$ is a smooth del Pezzo surface of degree $4$.
Let us identify $S$ with its anti-canonical image.
Thus, we consider $S$ as an intersection of two quadrics in $\mathbb{P}^4$.
Then~$S$ contains $16$ lines, which are $(-1)$-curves in $S$.
It follows from Lemma~\ref{lemma:del-Pezzo} that $P$ is not contained in any of them.

Observe that $\pi(S)\cong\mathbb{P}^1\times\mathbb{P}^1$,
and $\pi$ induces a birational morphism $\pi\vert_{S}\colon S\to\pi(S)$
that blows up the four intersection points $\pi(S)\cap C_4$.
Let $C_1$ and $C_2$ be the strict transforms on $S$ of the two rulings of the surface $\pi(S)$
which pass through the point $\pi(P)$.
Then $C_1$ and $C_2$ are irreducible conics.
On the other hand, the linear systems $|-K_S-C_1|$ and $|-K_S-C_2|$ are basepoint-free pencils.
Let~$Z_1$ be the curve in $|-K_S-C_1|$ containing $P$,
and let $Z_2$ be the curve in $|-K_S-C_2|$ containing~$P$.
Then $Z_1$ and $Z_2$ are irreducible conics (as otherwise $Z_1$ for instance would be a union of two $(-1)$-curves, which would violate part (2) of Lemma~\ref{lemma:del-Pezzo}). Moreover, we have
$$
P(u)\big\vert_{S}\sim_{\mathbb{R}}
\left\{\aligned
&\frac{3-2u}{2}(C_1+C_2)+\frac{1}{2}(Z_1+Z_2)\ \text{if $0\leqslant u\leqslant 1$}, \\
&(3-2u)(C_1+Z_1)\ \text{if $1\leqslant u\leqslant \frac{3}{2}$}.
\endaligned
\right.
$$
The~intersection form of the~curves $C_1$, $C_2$, $Z_1$, $Z_2$ is given in the following table:
\begin{center}\renewcommand{\arraystretch}{1.6}
\begin{tabular}{|c||c|c|c|c|}
    \hline
$\bullet$      &\quad  $C_1$ \quad\quad& \quad$C_2$ \quad\quad & \quad $Z_1$\quad\quad & \quad $Z_2$\quad\quad \\
\hline
\hline
 $C_1$        & $0$  &  $1$ & $2$ & $1$ \\
\hline
 $C_2$        & $1$  & $0$  & $1$ & $2$ \\
\hline
 $Z_1$        & $2$  & $1$  & $0$ & $1$ \\
\hline
 $Z_2$        & $1$  & $2$  & $1$  & $0$\\
\hline
\end{tabular}
\end{center}

Now, let $g\colon\widetilde{S}\to S$ be a blow-up of the~point $P$, let $Y$ be the~exceptional curve of the~blow-up~$g$,
and let $\widetilde{C}_1$, $\widetilde{C}_2$, $\widetilde{Z}_1$, $\widetilde{Z}_2$ be the strict transforms on $\widetilde{S}$ of the~curves
$C_1$, $C_2$, $Z_1$, $Z_2$, respectively.~Set
$$
\widetilde{t}(u)=\sup\Big\{v\in \mathbb{R}_{\geqslant 0} \ \big| \ \text{the $\mathbb{R}$-divisor}\ g^*\big(P(u)|_S\big)-vY \text{ is pseudo-effective}\Big\}
$$
for every $u\leqslant\frac{3}{2}$. Later, we will see that
\begin{equation}
\label{equation:t-u-blow-up}
\widetilde{t}(u)=\left\{\aligned
&4-2u\ \text{if $0\leqslant u\leqslant 1$}, \\
&6-4u\ \text{if $0\leqslant u\leqslant \frac{3}{2}$}.
\endaligned
\right.
\end{equation}
For $v\in [0,\widetilde{t}(u)]$, let $\widetilde{P}(u,v)$ be the~positive part of the~Zariski decomposition of $g^*(P(u)|_S)-vY$,
and let $\widetilde{N}(u,v)$ be the~negative part of this~Zariski decomposition. Then since $P\notin E'$, we have $P\notin\text{Supp}(N(u))$, so that Remark~\ref{remark:6} gives the following simplifications:
$$
S\big(W_{\bullet, \bullet}^{S};Y\big)=\frac{3}{28}\int_0^{\frac{3}{2}}\int_0^{\widetilde{t}(u)}\big(\widetilde{P}(u,v)\big)^2dvdu,
$$
and for every point $O\in Y$, we have
$$
S\big(W_{\bullet, \bullet,\bullet}^{S,Y};O\big)=\frac{3}{28}\int_0^{\frac{3}{2}}\int_0^{\widetilde{t}(u)}\big(\widetilde{P}(u,v)\cdot Y\big)^2dvdu+F_O\big(W_{\bullet,\bullet,\bullet}^{S,Y}\big),
$$
where
$$
F_O\big(W_{\bullet, \bullet,\bullet}^{S,Y}\big)=\frac{6}{28}\int_0^{\frac{3}{2}}\int_0^{\widetilde{t}(u)}\big(\widetilde{P}(u,v)\cdot Y\big)\times\mathrm{ord}_O\big(\widetilde{N}(u,v)\big|_{Y}\big)dvdu.
$$

Then Theorem~\ref{theorem:Hamid-Ziquan-Kento-3}  gives
\begin{equation}
\label{equation:Hamid-Ziquan-Kento}
\delta_P(X)\geqslant \min\left\{\frac{112}{51},\frac{2}{S(W_{\bullet,\bullet}^{S};Y)},\inf_{O\in Y}\frac{1}{S(W_{\bullet, \bullet,\bullet}^{S,Y};O)}\right\}.
\end{equation}
Let us use this to show that $\delta_P(X)\geqslant\frac{112}{111}$. First, let us compute $S(W_{\bullet,\bullet}^{S};Y)$.

The~intersection form of the~curves  $\widetilde{C}_1$, $\widetilde{C}_2$, $\widetilde{Z}_1$, $\widetilde{Z}_2$ on the~surface $\widetilde{S}$
is given in this~table:
\begin{center}\renewcommand{\arraystretch}{1.6}
\begin{tabular}{|c||c|c|c|c|c|}
    \hline
$\bullet$      &\quad  $\widetilde{C}_1$ \quad\quad& \quad$\widetilde{C}_2$ \quad\quad & \quad $\widetilde{Z}_1$\quad\quad & \quad $\widetilde{Z}_2$\quad\quad & \quad $Y$\quad\quad \\
\hline
\hline
$\widetilde{C}_1$       & $-1$  &  $0$ & $1$ & $0$ & $1$\\
\hline
$\widetilde{C}_2$       & $0$  & $-1$  & $0$ & $1$ & $1$\\
\hline
 $\widetilde{Z}_1$      & $1$  & $0$  & $-1$ & $0$ & $1$\\
\hline
$\widetilde{Z}_2$       & $0$  & $1$  & $0$  & $-1$& $1$\\
\hline
$Y$                     & $1$  & $1$  & $1$  & $1$ & $-1$\\
\hline
\end{tabular}
\end{center}
On the~other hand, we have
$$
g^*(P(u)|_S)-vY\sim_{\mathbb{R}}
\left\{\aligned
&\frac{3-2u}{2}(\widetilde{C}_1+\widetilde{C}_2)+\frac{1}{2}(\widetilde{Z}_1+\widetilde{Z}_2)+(4-2u-v)Y\ \text{if $0\leqslant u\leqslant 1$}, \\
&(3-2u)(\widetilde{C}_1+\widetilde{Z}_1)+(6-4u-v)Y\ \text{if $1\leqslant u\leqslant \frac{3}{2}$}.
\endaligned
\right.
$$
This gives \eqref{equation:t-u-blow-up},
because the intersection form of the curves $\widetilde{C}_1$, $\widetilde{C}_2$, $\widetilde{Z}_1$, $\widetilde{Z}_2$ is semi-negative definite.
Now, intersecting  $g^*(P(u)|_S)-vY$ with $\widetilde{C}_1$, $\widetilde{C}_2$, $\widetilde{Z}_1$, $\widetilde{Z}_2$,
we compute $\widetilde{P}(u,v)$ and $\widetilde{N}(u,v)$ for every non-negative real number $u\leqslant\frac{3}{2}$ and every real number $v\in[0,\widetilde{t}(u)]$.
Namely, if $u\leqslant 1$, then
$$
\widetilde{P}(u,v)\sim_{\R}
\left\{\aligned
&\frac{3-2u}{2}(\widetilde{C}_1+\widetilde{C}_2)+\frac{1}{2}(\widetilde{Z}_1+\widetilde{Z}_2)+(4-2u-v)Y\ \text{if $0\leqslant v\leqslant 3-u$}, \\
&\frac{9-4u-2v}{2}(\widetilde{C}_1+\widetilde{C}_2)+\frac{1}{2}(\widetilde{Z}_1+\widetilde{Z}_2)+(4-2u-v)Y  \ \text{if $3-u\leqslant v\leqslant 4-2u$},
\endaligned
\right.
$$
and
$$
\widetilde{N}(u,v)=
\left\{\aligned
&0\ \text{if $0\leqslant v\leqslant 3-u$}, \\
&(v+u-3)(\widetilde{C}_1+\widetilde{C}_2) \ \text{if $3-u\leqslant v\leqslant 4-2u$}.
\endaligned
\right.
$$
This implies that
$$
\big(\widetilde{P}(u,v)\big)^2=
\left\{\aligned
&2u^2-v^2-12u+14\ \text{if $0\leqslant v\leqslant 3-u$}, \\
&(2u+v-4)(2u+v-8)\ \text{if $3-u\leqslant v\leqslant 4-2u$},
\endaligned
\right.
$$
and
$$
\widetilde{P}(u,v)\cdot Y=
\left\{\aligned
&v\ \text{if $0\leqslant v\leqslant 3-u$}, \\
&6-2u-v\ \text{if $3-u\leqslant v\leqslant 4-2u$}.
\endaligned
\right.
$$
If $1\leqslant u\leqslant\frac{3}{2}$ and $v\in[0,6-4u]$, then $\widetilde{P}(u,v)=(3-2u)(\widetilde{C}_1+\widetilde{Z}_1)+(6-4u-v)Y$  and $\widetilde{N}(u,v)=0$,
so
\begin{align*}
\big(\widetilde{P}(u,v)\big)^2&=(4u-6+v)(4u-6-v),\\
\widetilde{P}(u,v)\cdot Y&=v.
\end{align*}
This gives
\begin{align*}
S\big(W_{\bullet,\bullet}^{S};Y\big)=\frac{3}{28}\int_0^1\int_0^{3-u}2u^2-v^2-12u+14dvdu+\quad\quad\quad\quad\quad\quad\quad\quad\\
+\frac{3}{28}\int_0^1\int_{3-u}^{4-2u}(2u+v-4)(2u+v-8)dvdu+\quad\quad\quad\quad\\
\quad\quad\quad\quad\quad\quad\quad\quad\quad+\frac{3}{28}\int_1^{\frac{3}{2}}\int_{0}^{6-4u}(4u-6+v)(4u-6-v)dvdu=\frac{111}{56}.
\end{align*}
Hence, using \eqref{equation:Hamid-Ziquan-Kento}, we see that $\delta_P(X)\geqslant\frac{112}{111}$
provided that $S(W_{\bullet, \bullet,\bullet}^{S,Y};O)\leqslant\frac{111}{112}$ for every point $O\in Y$.

Fix $O\in Y$. Let us show that $S(W_{\bullet, \bullet,\bullet}^{S,Y};O)\leqslant\frac{111}{112}$.
We have
\begin{multline*}
S\big(W_{\bullet, \bullet,\bullet}^{S,Y};O\big)=\frac{3}{28}\int_0^1\int_0^{3-u}v^2dvdu+\frac{3}{28}\int_0^1\int_{3-u}^{4-2u}(6-2u-v)^2dvdu+\\
+\frac{3}{28}\int_1^{\frac{3}{2}}\int_{0}^{6-4u}v^2dvdu+F_O\big(W_{\bullet,\bullet,\bullet}^{S,Y}\big)=\frac{51}{56}+F_O\big(W_{\bullet,\bullet,\bullet}^{S,Y}\big).\quad\quad\quad\quad\quad\quad
\end{multline*}
Recall that $\widetilde{N}(u,v)=(v+u-3)(\widetilde{C}_1+\widetilde{C}_2)$ for $u\leqslant1$ and $3-u\leqslant v\leqslant4-2u$, and it is zero for all other values of $u,v$. Moreover, we have

$$
\mathrm{ord}_O\big(\widetilde{N}(u,v)\big|_{Y}\big)=
\left\{\aligned
&0\ \text{if $O\notin\widetilde{C}_1\cup\widetilde{C}_2$}, \\
&(v+u-3) \ \text{if $O\in\widetilde{C}_1\cup\widetilde{C}_2$}.
\endaligned
\right.
$$

Thus if $O\notin\widetilde{C}_1\cup\widetilde{C}_2$ then $F_O(W_{\bullet,\bullet,\bullet}^{S,Y})=0$,
so that  $S(W_{\bullet, \bullet,\bullet}^{S,Y};O)=\frac{51}{56}<\frac{111}{112}$ as required.
On the other hand if $O\in\widetilde{C}_1\cup\widetilde{C}_2$, then
$$
F_O\big(W_{\bullet, \bullet,\bullet}^{S,Y}\big)=\frac{6}{28}\int_0^{1}\int_{3-u}^{4-2u}(6-2u-v)(v+u-3)dvdu=\frac{9}{112},
$$
so $S(W_{\bullet, \bullet,\bullet}^{S,Y};O)=\frac{111}{112}$.
Thus, applying \eqref{equation:Hamid-Ziquan-Kento}, we get $\delta_P(X)\geqslant\frac{112}{111}$ as claimed.
\end{proof}
\begin{lemma}
\label{lemma:technical2}
Let $D$ be a prime divisor over $X$ such that its centre on $X$ is a curve $C\subset E\cup E'$, and suppose that $C$ is not a fibre of $\pi|_E$ or $\pi'|_{E'}$. Then $A_X(D)>S_X(D)$. 
\end{lemma}

\begin{proof}
Without loss of generality, we may assume that $C\subset E$.
Let us apply Theorem~\ref{theorem:Hamid-Ziquan-Kento-1} with~$S=E$.
To do this, we fix $u\in\mathbb{R}_{\geqslant 0}$. Then
$$
-K_X-uE\sim_{\mathbb{R}}3H-(1+u)E\sim_{\mathbb{R}} E^\prime+(1-u)E.
$$

This shows that $-K_X-uE$ is pseudoeffective $\iff$ $u\leqslant 1$.

With $\mathbf{f},\mathbf{f'}$ as before, we have that 

$$
(-K_X-uE)\cdot\mathbf{f} =1+u,\quad (-K_X-uE)\cdot\mathbf{f'} =1-2u,
$$
\\
so that $-K_X-uE$ is nef for $0\leqslant u\leqslant\frac{1}{2}$. For $u\leqslant 1$, let $P(u)$ be the positive part of the~Zariski decomposition of the~divisor $-K_X-uE$,
and let $N(u)$ be its negative part. Then
$$
P(u)\sim
\left\{\aligned
&3H-(1+u)E \ \text{if $0\leqslant u\leqslant \frac{1}{2}$}, \\
&(3-3u)(2H-E)\ \text{if $\frac{1}{2}\leqslant u\leqslant 1$},
\endaligned
\right.
$$
and
$$
N(u)=\left\{\aligned
&0 \ \text{if $0\leqslant u\leqslant \frac{1}{2}$}, \\
&(2u-1)E^\prime\ \text{if $\frac{1}{2}\leqslant u\leqslant 1$}.
\endaligned
\right.
$$
Thus, we have
\begin{multline*}
S_X(E)=\frac{1}{28}\int_{0}^{1}\big(P(u)\big)^3du=\frac{1}{28}\int_{0}^{\frac{1}{2}}\big(3H-(1+u)E\big)^3du+\frac{1}{28}\int_{\frac{1}{2}}^{1}\big((3-3u)(2H-E)\big)^3du=\\
=\frac{1}{28}\int_{0}^{\frac{1}{2}}10u^3-6u^2-42u+28du+\frac{1}{28}\int_{\frac{1}{2}}^{1}54(1-u)^3du=\frac{19}{56}.\quad\quad\quad\quad\quad
\end{multline*}

By Lemma~\ref{lemma:normal-bundle}, $E\cong\mathbb{F}_{n}$ for some $n\in\{0,2,4,6\}$. Let $\mathbf{s}$ be the unique section of the~projection $E\to C_4$ such that $\mathbf{s}^2=-n$,
and let $\mathbf{f}$ be the fibre of this projection. Then, arguing as in the proof of Lemma~\ref{lemma:normal-bundle},
we see that 
$$
P(u)\big\vert_{E}\sim_{\mathbb{R}}
\left\{\aligned
&(u+1)\mathbf{s}+\frac{n+14-(10-n)u}{2}\mathbf{f}\ \text{if $0\leqslant u\leqslant \frac{1}{2}$}, \\
&(3-3u)\mathbf{s}+\frac{3(n+6)(1-u)}{2}\mathbf{f}\ \text{if $\frac{1}{2}\leqslant u\leqslant 1$}.
\endaligned
\right.
$$
In particular, if $\frac{1}{2}\leqslant u\leqslant 1$, then
$$
\big(P(u)\big\vert_{E}\big)^2=54(1-u)^2.
$$
On the other hand, it follows from Theorem~\ref{theorem:Hamid-Ziquan-Kento-1} that
\begin{equation}
\label{equation:AZF-E}
\frac{A_X(D)}{S_X(D)}\geqslant\min\Bigg\{\frac{56}{19},\frac{1}{S\big(W^E_{\bullet,\bullet};C\big)}\Bigg\},
\end{equation}
where 
$$
S\big(W^E_{\bullet,\bullet};C\big)=\frac{3}{28}\int_{\frac{1}{2}}^1\big(P(u)\big\vert_{E}\big)^2(2u-1)\mathrm{ord}_{C}\big(E^\prime\big\vert_{E}\big)du+\frac{3}{28}\int_0^1\int_0^{\infty}\mathrm{vol}\big(P(u)\big\vert_{E}-vC\big)dvdu.
$$
Thus, to complete the proof, it is enough to show that $S(W^E_{\bullet,\bullet};C)<1$. Let us do this.

Observe that 
$$
E^\prime\big\vert_{E}\sim\big(3H-2E\big)\big\vert_{E}\sim2\mathbf{s}+(2+n)\mathbf{f}. 
$$
Thus, we have $\mathrm{ord}_{C}(E^\prime\vert_{E})\leqslant 2$. This gives 
$$
\frac{3}{28}\int_{\frac{1}{2}}^1\big(P(u)\big\vert_{E}\big)^2(2u-1)\mathrm{ord}_{C}\big(E^\prime\big\vert_{E}\big)du
\leqslant\frac{3}{28}\int_{\frac{1}{2}}^{1}108(2u-1)(1-u)^2du=\frac{27}{224}.
$$
Moreover, since $C$ is not a fibre of the projection $E\to C_4$ we may write $C\sim \mathbf{s}+\mathbf{e}$, where $\mathbf{e}$ is an effective divisor on $E$. Then since volume of real classes is non-decreasing in effective directions (see for example \cite[Example 2.2.48]{positivity}), we have that $\mathrm{vol}\big(P(u)\big\vert_{E}-vC\big)\leq\mathrm{vol}\big(P(u)\big\vert_{E}-v\mathbf{s}\big)$. This gives
$$
\frac{3}{28}\int_0^1\int_0^{\infty}\mathrm{vol}\big(P(u)\big\vert_{E}-vC\big)dvdu\leqslant\frac{3}{28}\int_0^1\int_0^{\infty}\mathrm{vol}\big(P(u)\big\vert_{E}-v\mathbf{s}\big)dvdu.
$$
To estimate the integral in the left hand side of this inequality, set
$$
t(u)=\left\{\aligned
&1+u\ \text{if $0\leqslant u\leqslant \frac{1}{2}$}, \\
&3-3u\ \text{if $\frac{1}{2}\leqslant u\leqslant 1$}.
\endaligned
\right.
$$
Observe that
$$
P(u)\big\vert_{E}-v\mathbf{s}\sim_{\mathbb{R}}
\left\{\aligned
&(u-v+1)\mathbf{s}+\frac{n+14-(10-n)u}{2}\mathbf{f}\ \text{if $0\leqslant u\leqslant \frac{1}{2}$}, \\
&(3-3u-v)\mathbf{s}+\frac{3(n+6)(1-u)}{2}\mathbf{f}\ \text{if $\frac{1}{2}\leqslant u\leqslant 1$}.
\endaligned
\right.
$$
Then $P(u)\big\vert_{E}-v\mathbf{s}$ is pseudoeffective $\iff$ $P(u)\big\vert_{E}-v\mathbf{s}$ is nef $\iff$ $v\leqslant t(u)$.
This gives
\begin{multline*}
\frac{3}{28}\int_0^1\int_0^{\infty}\mathrm{vol}\big(P(u)\big\vert_{E}-v\mathbf{s}\big)dvdu=\frac{3}{28}\int_0^1\int_0^{t(u)}\big(P(u)\big\vert_{E}-v\mathbf{s}\big)^2dvdu=\\
=\frac{3}{28}\int_0^{\frac{1}{2}}\int_0^{1+u}\big((u-v+1)\mathbf{s}+\frac{n+14-(10-n)u}{2}\mathbf{f}\big)^2dvdu+\\
+\frac{3}{28}\int_{\frac{1}{2}}^{1}\int_0^{3-3u}\big((3-3u-v)\mathbf{s}+\frac{3(n+6)(1-u)}{2}\mathbf{f}\big)^2dvdu=\\
=\frac{3}{28}\int_0^{\frac{1}{2}}\int_0^{1+u}14+4u+(n-14)v-10u^2-nv^2+(n+10)vudvdu+\\
+\frac{3}{28}\int_{\frac{1}{2}}^{1}\int_0^{3-3u}54-108u+(3n-18)v+54u^2-v^2n+(18-3n)vudvdu=\frac{23n+546}{896}.
\end{multline*}
In summary therefore, we get 
$$
S(W^E_{\bullet,\bullet};C)\leqslant\frac{27}{224}+\frac{23n+546}{896}=\frac{654+23n}{896}\leqslant\frac{99}{112},
$$
because $n\in\{0,2,4,6\}$. Now, using \eqref{equation:AZF-E}, we get $A_X(D)>S_X(D)$.
\end{proof}

Now, we are ready to prove the Main Theorem.

\section{Proof of Main Theorem}
\label{section:proof}

\begin{theorem}
    \label{theorem:main}
    Suppose $\mathrm{Aut}(Q,C_4)$ contains a subgroup $G\cong\Z_2^2$. Then $X$ is K-polystable.
\end{theorem}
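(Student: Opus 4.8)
The plan is to pass to equivariant K-stability. Since $G\cong\mumu_2^2$ is finite it is reductive, so by the equivariant criteria of \cite{AbbanZhuang,Book} it suffices to show that $A_X(F)/S_X(F)>1$, bounded away from $1$, for every $G$-invariant prime divisor $F$ over $X$ --- equivalently, that $\delta_G(X)>1$, or that $\beta(F)>0$ for all such $F$. The main inputs are Theorems~\ref{theorem:technical1} and~\ref{theorem:technical2}, the value $S_X(E)=\tfrac{19}{56}$ computed in the proof of Theorem~\ref{theorem:technical2} (together with the analogous $S_X(E')=\tfrac{19}{56}$), and a short argument showing that a $G$-invariant prime divisor over $X$ cannot have a problematic centre.

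First I would pin down the action of $G$ on $C_4$ and $C_4'$. Any $g\in G\subseteq\mathrm{Aut}(Q,C_4)$ restricts to an automorphism of $C_4\cong\P^1$, and if this restriction is the identity then $g$ fixes pointwise the non-degenerate curve $C_4\subset\P^4$, which forces $g=\mathrm{id}$. Hence $G$ embeds into $\mathrm{Aut}(C_4)=\mathrm{PGL}_2(\C)$ as the Klein four-group, which acts on $\P^1$ without fixed points. The same argument carried out through $\pi'\colon X\to Q'$, using that $C_4'$ is non-degenerate in its $\P^4$, shows $G$ acts faithfully --- hence without fixed points --- on $C_4'\cong\P^1$ as well. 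I would also record, from the intersection computations in the proofs of Theorems~\ref{theorem:technical1} and~\ref{theorem:technical2}, that $E\cdot\mathbf{f'}=2$ and $E'\cdot\mathbf{f}=2$; in particular no fibre of $\pi'$ is contained in $E$ and no fibre of $\pi$ is contained in $E'$, so $\pi'|_E$ and $\pi|_{E'}$ are finite morphisms.

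Then I would run a case analysis on the ($G$-invariant) centre $Z=C_X(F)$. If $\dim Z=0$ then $Z$ is a $G$-fixed point; it cannot lie on $E\cup E'$, for then $\pi(Z)\in C_4$ (if $Z\in E$) or $\pi'(Z)\in C_4'$ (if $Z\in E'$) would be a $G$-fixed point; so $Z\notin E\cup E'$ and Theorem~\ref{theorem:technical1} gives $A_X(F)/S_X(F)\geqslant\delta_Z(X)\geqslant\tfrac{112}{111}$. If $Z$ is a curve not contained in $E\cup E'$, or a prime divisor on $X$ other than $E$ and $E'$, then $Z$ is irreducible and contained in neither $E$ nor $E'$, so a general point $P\in Z$ lies off $E\cup E'$, and again $A_X(F)/S_X(F)\geqslant\delta_P(X)\geqslant\tfrac{112}{111}$. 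If $F=E$ or $F=E'$, then $A_X(F)/S_X(F)=\tfrac{56}{19}$, using $S_X(E)=S_X(E')=\tfrac{19}{56}$ (for $E'$ by the symmetry of the construction exchanging $(\pi,E)$ with $(\pi',E')$, or by repeating the computation). Finally, if $Z$ is a curve contained in, say, $E$: then $\pi(Z)$ is not a point (since $G$ has no fixed point on $C_4$ and $Z$ is $G$-invariant) and $\pi'(Z)$ is not a point ($\pi'|_E$ being finite), so Theorem~\ref{theorem:technical2} applies --- its proof in fact yielding the uniform bound $A_X(F)/S_X(F)\geqslant\tfrac{112}{99}$ --- and symmetrically for $Z\subseteq E'$.

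Putting the cases together, $A_X(F)/S_X(F)\geqslant\min\{\tfrac{112}{111},\tfrac{112}{99},\tfrac{56}{19}\}=\tfrac{112}{111}>1$ for every $G$-invariant prime divisor $F$ over $X$, whence $\delta_G(X)\geqslant\tfrac{112}{111}>1$ and $X$ is K-polystable. I expect two main obstacles: citing the correct form of the equivariant K-stability criterion --- that a uniform bound $>1$ on $A_X(F)/S_X(F)$ over $G$-invariant divisors $F$ over $X$ yields K-polystability (this is the point where reductivity of $G$ enters, and it is robust to $\mathrm{Aut}^0(X)$ being positive-dimensional, as in the case $Q=\{f_5=0\}$) --- and, on the geometric side, the clean exclusion of the two families of bad centres, namely fibres of $\pi$ and $\pi'$ and points of $E\cup E'$, which rests entirely on the Klein four-group acting freely on both $C_4$ and $C_4'$.
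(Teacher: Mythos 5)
Your proposal is correct and follows essentially the same strategy as the paper: reduce to $G$-invariant prime divisors over $X$ via the equivariant valuative criterion, run a case analysis on the centre, apply Theorems~\ref{theorem:technical1} and~\ref{theorem:technical2} to the generic and curve-in-$E\cup E'$ cases respectively, and rule out points and fibres in $E\cup E'$ using the fact that the Klein four-group acts on $C_4$ (and $C_4'$) without fixed points. The only cosmetic difference is that for divisorial centres the paper cites divisorial stability from \cite[3.7]{Book}, whereas you handle $E$ and $E'$ by the explicit value $S_X(E)=\tfrac{19}{56}$ and all other divisors via a general point off $E\cup E'$; both routes work, and your added details (faithfulness of the restriction to $C_4$, finiteness of $\pi'|_E$) correctly fill in steps the paper leaves implicit.
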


\begin{proof}
    Fix a $G$-equivariant birational morphism $h\colon\widetilde{X}\to X$ and $G$-invariant prime divisor $D\subset \widetilde{X}$. Then $Z=h(D)$ is either a surface, curve or point. We show that $A_X(D)>S_X(D)$ in all possible cases, so that $X$ is K-polystable by \cite[Theorem 1.6]{Fujita2019}. \\
    \begin{itemize}
        \item[\textbf{Case 1.}] $Z$ is a surface. \\ 

        By \cite[Theorem 10.1]{divisorial} $X$ is divisorially stable, so that $$\frac{A_X(D)}{S_X(D)}=\frac{A_X(Z)}{S_X(Z)}>1.$$ 
        \vspace{0.1cm}        
        \item[\textbf{Case 2.}] $Z$ is contained in $E\cup E'$ as either a point or a fibre of $\pi$ or $\pi'$. \\

         Without loss of generality we may assume that $Z$ is contained in $E$. Then $\pi(Z)$ is a $G$-invariant point in $C_4$. Since $C_4$ is rational normal curve $\mathrm{Aut}(Q,C_4)$, and hence $G$, acts faithfully on $C_4$. I claim that that $G$ acts freely on $C_4$, which gives a contradiction. Indeed, if we write $g,h$ for the generators of $G\cong\Z_2\times\Z_2$ then $g$, being finite order, fixes precisely two distinct points $p,q\in C_4$. Then $h(p)=h(g(p))=g(h(p))$, which implies that $h(p)$, and similarly $h(q)$, is a fixed point of $g$. So if $h(p)=p$ and $h(q)=q$, then since $g$ and $h$ are both order 2, they must be equal. Thus $h$ swaps $p$ and $q$, so that $G$ acts freely. \\
            
        \item[\textbf{Case 3.}] $Z$ is a curve contained in $E\cup E'$, and not a fibre of $\pi$ nor $\pi'$.\\

        Then $A_X(D)>S_X(D)$ by Lemma \ref{lemma:technical2}. \\ 

        \item[\textbf{Case 4.}] $Z$ is either a curve or a point, and is not contained in $E\cup E'$. \\ 

        Then there exists a point $P\in Z$ such that $P\notin E\cup E'$. Then $\delta_P(X)>1$ by Lemma \ref{lemma:technical1}, so that
        $$\frac{A_X(D)}{S_X(D)}\geq\delta_P(X)>1.$$
    \end{itemize}
    \noindent Thus $A_X(D)>S_X(D)$ for all $G$-invariant prime divisors $D$ over $X$.
\end{proof}

\section*{Appendix}
\label{section:normal bundles}
In this section, we present a classification of the normal bundle $\mathcal{N}_{C_4}/Q$ of a rational normal quartic curve $C_4$ in a smooth quadric threefold $Q$. A corollary to this is a refinement of the statement of Lemma \ref{lemma:normal-bundle}.\\

Recall that the curve $C_4$ is given by the Veronese embedding:
\begin{align*}
    \nu\colon\P^1&\to\P^4 \\ 
    [u:v]&\mapsto[u^4:u^3v:u^2v^2:uv^3:v^4].
\end{align*}
By Theorem \ref{theorem:classification} we may assume that the quadric $Q$ is given by one of the following equations:

\begin{enumerate}
\item[$(\mathrm{1})$] $\mu(f_0+f_4)+\lambda f_2+f_5=0$, for some $\lambda,\mu\in\mathbb{C}$ such that
$$(\mu - 2)(\mu + 2)(\lambda - 1)(\lambda^2 + \mu^2 + 2\lambda - 3)\neq0,$$
\item[$(\mathrm{2})$] $f_0+\lambda f_2+f_5=0$, for some $\lambda\in\mathbb{C}\setminus\{1,-3\}$,
\item[$(\mathrm{3})$] $f_1+f_5=0$.
\end{enumerate}
Then we have the following classification of the normal bundle $\mathcal{N}_{C_4/Q}$:

\begin{theorem}
\label{normal bundles}
    For a general smooth quadric $Q$ which contains the curve $C_4$, the normal bundle $\mathcal{N}_{C_4/Q}$ is isomorphic to $\O_{\P^1}(5)^{\oplus2}$. Otherwise $\mathcal{N}_{C_4/Q}\cong\O_{\P^1}(4)\oplus\O_{\P^1}(6)$, which holds for the following exceptions:
    \begin{itemize}
        \item In case (1) we have $(\lambda-1)^2-\mu^2=0$,
        \item If $\lambda=-1$ in any of the above cases.
    \end{itemize}
\end{theorem}

\begin{proof}
Observe that $\mathcal{N}_{C_4/Q}$ fits into the following exact sequence 
\begin{equation}
    \label{normals}
    0\longrightarrow\mathcal{N}_{C_4/Q}\longrightarrow\mathcal{N}_{C_4/\P^4}\overset{\psi_f}{\longrightarrow}\mathcal{N}_{Q/\P^4}|_{C_4}\longrightarrow0,
\end{equation}
where the morphism $\psi_f$ depends on the equation $f$ of the quadric $Q$. Clearly $\mathcal{N}_{Q/\P^4}|_{C_4}\cong\O_{\P^1}(8)$. Let us compute $\mathcal{N}_{C_4/\P^4}$. For ease of calculation, we compute the conormal bundle $\mathcal{N}_{C_4/\P^4}^\vee$, and take its dual. For this, consider the following diagram consisting of the Euler sequences for $\P^1$ and $\P^4$, and the conormal exact sequence of $C\subset\P^4$:
\begin{center}
\begin{tikzcd}
                &       0 \arrow[d] & 0 \arrow[d] & & \\
                &     \mathcal{N}_{C_4/\P^4}^\vee \arrow[r, equals, shorten <=15pt, shorten >=15pt] \arrow[d]         & \mathcal{N}_{C_4/\P^4}^\vee \arrow[d]                &       & \\
    0 \arrow[r] & \Omega^1_{\P^4}|_{C_4} \arrow[r] \arrow[d, "(d\nu)^*"] & \O_{\P^4}(-1)|_{C_4}^{\oplus5} \arrow[r] \arrow[d, "(J\nu)^*"] & \O_{\P^1} \arrow[r] \arrow[d, "\cdot 4"] & 0\\
    0 \arrow[r] & \Omega^1_{\P^1} \arrow[r]   \arrow[d]        & \O_{\P^1}(-1)^{\oplus2} \arrow[r] \arrow[d]   & \O_{\P^1} \arrow[r]  & 0 \\
    & 0 & 0 & &
\end{tikzcd}
\end{center}
Then the rows and columns are exact, and a simple diagram chase shows that the conormal bundle $\mathcal{N}^\vee_{C_4/\P^4}$ is isomorphic to the kernel of morphism of sheaves:

$$\O_{\P^1}(-4)^{\oplus5}\to\O_{\P^1}(-1)^{\oplus2}$$
given by the dual of the Jacobian matrix of $\nu$:

$$(J\nu)^*=\begin{pmatrix}
    4u^3 & 3u^2v & 2uv^2 & v^3 & 0 \\ 
    0 & u^3 & 2u^2v & 3uv^2 & 4v^3
\end{pmatrix}.$$
Let us compute $\mathrm{ker}\,(J\nu)^*$. Let $(F_0,F_1,F_2,F_3,F_4)$ be a (local) section of the sheaf $\mathrm{ker}\,(J\nu)^*$. Then we get the two following equations:
\begin{equation}
    \label{eqs1}
\begin{cases}
4u^3F_0+3u^2vF_1+2uv^2F_2+v^3F_3 = 0, \\ 
u^3F_1+2u^2vF_2+3uv^2F_3+4v^3F_4 = 0,
\end{cases}
\end{equation}
from which we deduce that $v$ divides $F_1$ and $u$ divides $F_3$. Thus we can write $F_1=vF_1'$ and $F_3=uF_3'$, for forms $F_1',F_3'$ of degree $-5$. Substituting these into (\ref{eqs1}), we deduce that $v^2$ divides $F_0$ and $u^2$ divides $F_4$. Thus we write $F_0=v^2F_0'$ and $F_4=u^2F_4'$, for forms $F_0',F_4'$ of degree $-6$. Then we get the equations
$$
\begin{cases}
    4u^2F_0'+3uF_1'+2F_2+vF_3' = 0 \\
    uF_1'+2F_2+3vF_3'+4v^2F_4' = 0.
\end{cases}
$$
If subtract the second equation from the first, then we get 
$$
2u^2F_0'+uF_1'-vF_3'-2v^2F_4'=0,
$$
from which we deduce that $v$ divides $2uF_0'+F_1'$, and so there is a form $G$ of degree $-6$ such that $F_1'=vG-2uF_0'$. It then follows that $F_3'=uG-2vF_4'$ and $F_2=u^2F_0'-2uvG+v^2F_4'$. Thus, we have the following kernel relations:
$$\begin{pmatrix}
v^2 & 0 & 0 \\ 
-2uv & v^2 & 0 \\ 
u^2 & -2uv & v^2 \\ 
0 & u^2 & -2uv \\ 
0 & 0 & u^2
\end{pmatrix}\begin{pmatrix} F_0' \\ G \\ F_4'\end{pmatrix}=\begin{pmatrix}F_0 \\ F_1 \\ F_2 \\ F_3 \\ F_4\end{pmatrix}.$$
The above matrix clearly has rank 3, so defines an isomorphism $\O_{\P^1}(-6)^{\oplus3}\xrightarrow{\sim}\mathcal{N}^\vee_{C_4/\P^4}$. Thus we have that
$$\mathcal{N}_{C/\P^4}\cong\O_{\P^1}(6)^{\oplus3}.$$

Using this result and the exact sequence (\ref{normals}), we can compute $\mathcal{N}_{C_4/Q}$. In the notion used in \cite[13]{Izzet}, the ideal of $C_4$ in $\P^4$ is generated by the following determinental forms:
\begin{align*}
    q_{12}&=f_4=x_1^2-x_0x_2, \\ 
    q_{13}&=f_3=x_1x_2-x_0x_3, \\ 
    q_{14}&=\frac{3f_2-f_5}{4}=x_1x_3-x_0x_4, \\ 
    q_{23}&=\frac{f_2+f_5}{4}=x_2^2-x_1x_3, \\ 
    q_{24}&=f_1=x_2x_3-x_1x_4, \\ 
    q_{34}&=f_0=x_3^2-x_2x_4.
\end{align*}
Let the equation of $Q$ be:
$$f=a_{12}q_{12}+a_{13}q_{13}+a_{14}q_{14}+a_{23}q_{23}+a_{24}q_{24}+a_{34}q_{34}=0.$$
Then by Theorem \ref{theorem:classification}, after a projective transformation $Q$ is given by one of the following equations:
\begin{enumerate}[$(1)$]
\item $\mu(q_{12}+q_{34})+(\lambda-1)q_{14}+(\lambda+3)q_{23}=0$, for some $\lambda\in\C\setminus\{1\}$ and $\mu\in\C\setminus\{2,-2\}$ such that $\mu^2\neq-\lambda^2-2\lambda+3$,
\item $q_{34}+(\lambda-1)q_{14}+(\lambda+3)q_{23}=0$, for some $\lambda\in\mathbb{C}\setminus\{1,-3\}$,
\item $q_{24}+3q_{23}-q_{14}=0$.
\end{enumerate}

Let us compute the normal bundle $\mathcal{N}_{C_4/Q}$ of $C_4$ in $Q$ in each of these cases. \\ 

By (\ref{normals}), $\mathcal{N}_{C_4/Q}$ is isomorphic to the kernel of the map 
$$\psi_f\colon\mathcal{N}_{C_4/\P^4}\cong\O_{\P^1}(6)^{\oplus3}\longrightarrow\mathcal{N}_{Q/\P^4}|_{C_4}\cong\O_{\P^1}(8).$$
By \cite[13]{Izzet}, this map is given by the following 1-by-3 matrix:
$$
\begin{pmatrix}
    a_{12}u^2+a_{13}uv+a_{14}v^2 \\
    a_{13}u^2+(a_{14}+a_{23})uv+a_{24}v^2 \\
    a_{14}u^2+a_{24}uv+a_{34}v^2
\end{pmatrix}^\text{T}.
$$ 

\subsection*{Case (1)}
Then the morphism $\psi_f$ is given by the matrix:
$$\begin{pmatrix}\mu u^2+(\lambda-1)v^2 & (2\lambda+2)uv & (\lambda-1)u^2+\mu v^2\end{pmatrix}.$$ Suppose that $(F,G,H)$ is any section of $\O_{\P^1}(6)^{\oplus3}$ such that $\psi_f((F,G,H))=0$. Then:
\begin{equation}
\label{kernel}
    (\mu F + (\lambda - 1) H) u^2 + (2 \lambda + 2) u v G + ((\lambda - 1) F + \mu H) v^2=0.
\end{equation}
Let us assume that $(\lambda-1)^2-\mu^2\neq0$, and also that $\lambda\neq-1$. Then (\ref{kernel}) implies that $u$ divides $(\lambda-1)F+\mu H$ and $v$ divides $\mu F+(\lambda-1)H$. Thus, we obtain the linear equations:
$$
\begin{pmatrix}
    \lambda-1 & \mu \\
    \mu & \lambda-1 
\end{pmatrix}
\begin{pmatrix}
    F \\ H
\end{pmatrix}=
\begin{pmatrix}
    uF' \\ vH'
\end{pmatrix}
$$
for some degree 5 forms $(F',H').$ Since $(\lambda-1)^2-\mu^2\neq0, $ we can solve the above system for $F$ and $H$:
$$
\begin{pmatrix}
    F \\ H
\end{pmatrix}
=\frac{1}{(\lambda-1)^2-\mu^2}
\begin{pmatrix}
    (\lambda-1)uF'-\mu v H' \\ 
    -\mu u F'+(\lambda-1)vH'.
\end{pmatrix}
$$
Then by substituting the above expressions for $F$ and $H$ into (\ref{kernel}), we obtain the kernel relation:
$$\begin{pmatrix}
    \frac{(\lambda-1)u}{(\lambda-1)^2-\mu^2} & -\frac{\mu v}{(\lambda-1)^2-\mu^2} \\ 
    -\frac{v}{2\lambda+2} & \frac{u}{2\lambda+2} \\ 
    -\frac{\mu u}{(\lambda-1)^2-\mu^2} & \frac{(\lambda-1)v}{(\lambda-1)^2-\mu^2}
\end{pmatrix}
\begin{pmatrix}F' \\ H'\end{pmatrix}
=\begin{pmatrix}F \\ G \\ H\end{pmatrix}.$$
The matrix above is rank 2, so that $\mathcal{N}_{C_4/Q}\cong\O_{\P^1}(5)^{\oplus2}.$  

If on the other hand $\lambda=-1$, then instead we see that $u^2$ divides $-2F+\mu H$ and $v^2$ divides $\mu F-2H$, so we obtain the following linear equations:

$$
\begin{pmatrix}
    -2 & \mu \\ \mu & -2 
\end{pmatrix}
\begin{pmatrix}
    F \\ H
\end{pmatrix}=
\begin{pmatrix}
    u^2F'' \\ v^2 H''
\end{pmatrix},
$$
for some degree 4 forms $(F'',H'')$. Since $\mu^2\neq4$ by smoothness, these equations have the unique solution:
$$
\begin{pmatrix}
    F \\ H
\end{pmatrix}=\frac{1}{\mu^2-4}
\begin{pmatrix}
    2u^2F''+\mu v^2 H'' \\ 
    \mu u^2 F''+2v^2 H''
\end{pmatrix}
$$

Substituting these back into (\ref{kernel}) and cancelling $u^2v^2$ gives $H''=-F''$. Thus, we obtain the kernel relations:
$$\begin{pmatrix}
    \frac{2u^2-\mu v^2}{\mu^2-4} & 0 \\ 
    0 & 1 \\ 
    \frac{\mu u^2 -  2v^2}{\mu^2-4} & 0
\end{pmatrix}
\begin{pmatrix}F'' \\ G\end{pmatrix}
=\begin{pmatrix}F \\ G \\ H\end{pmatrix},$$
so that $\mathcal{N}_{C_4/Q}\cong\O_{\P^1}(4)\oplus\O_{\P^1}(6).$ \\

Now if $(\lambda-1)^2-\mu^2=0$, then $\lambda=1+\alpha\mu$, where $\alpha\in\{1,-1\}$. Then we have the following expression:
\begin{equation}
    \label{ker4}
    \mu(F+\alpha H)(u^2+\alpha v^2)+(4+2 \alpha \mu)uvG=0.
\end{equation}
Then we see that by (\ref{ker4}) $u^2+\alpha v^2$ divides $G$ ($\mu\neq0$ since $Q$ is smooth). Thus, we have that $(u^2+\alpha v^2)G'=G$ for some degree 4 form $G'$, which gives the following kernel relation:

$$
\begin{pmatrix}
1 & 0 \\ 0 & u^2+\alpha v^2 \\ -\alpha & -(4\alpha\mu^{-1}+2)uv
\end{pmatrix}
\begin{pmatrix}
F \\ G'
\end{pmatrix}
=\begin{pmatrix}F \\ G \\ H \end{pmatrix}.
$$
Thus $\mathcal{N}_{C_4/Q}\cong\O_{\P^1}(4)\oplus\O_{\P^1}(6).$ \\ 

\subsection*{Case (2)} Then the morphism $\psi_f$ is given by the matrix: 
$$\begin{pmatrix}(\lambda-1) v^2 & (2\lambda+2)uv & (\lambda-1)u^2+v^2\end{pmatrix}.$$ First assume that $\lambda\neq-1$. Suppose that $(F,G,H)\in\mathrm{ker}\,\psi_f$. Then we have the relation:
\begin{equation}
    \label{ker2}
    (\lambda-1) v^2F+(2\lambda+2)uvG+((\lambda-1)u^2+v^2)H=0,
\end{equation}
from which it follows that $u$ divides $(\lambda-1) F+H$ and $v$ divides $H$. Then we can write $H=vH'$ and $uF'=(\lambda-1)F+H$, for forms $F',H'$ of degree 5. It follows from this that $G=\frac{(\lambda-1)uH'-vF'}{2\lambda+2}$. Thus, we have the kernel relations: 
$$
\begin{pmatrix}
\frac{u}{\lambda-1} & \frac{v}{1-\lambda} \\ 
-\frac{v}{2\lambda+2} & \frac{(1-\lambda)u}{2\lambda+2}\\
0 & v
\end{pmatrix}
\begin{pmatrix}F' \\ H'\end{pmatrix}
=\begin{pmatrix}F \\ G \\ H\end{pmatrix}
$$
for some degree 5 forms $F',H',$ so that $\mathcal{N}_{C_4/Q}\cong\O_{\P^1}(5)^{\oplus2}.$\\ 

If $\lambda=-1$, then by (\ref{ker2}) we have that $v^2$ divides $H$, which gives the following kernel relations:
$$
\begin{pmatrix}0 & \frac{v^2}{2}-u^2 \\
1 & 0 \\
0 & v^2\end{pmatrix}
\begin{pmatrix}G \\ H'\end{pmatrix}
=\begin{pmatrix}F \\ G \\ H\end{pmatrix}
$$
for some degree 4 form $H'$, and since the above matrix is rank 2, we have that $\mathcal{N}_{C_4/Q}\cong\O_{\P^1}(4)\oplus\O_{\P^1}(6).$\\  

\subsection*{Case (3)} Then the morphism $\psi_f$ is given by the matrix: 
$$
\begin{pmatrix}-v^2 & 2uv+v^2 & uv-u^2\end{pmatrix}.$$

If $(F,G,H)\in\mathrm{ker}\,\psi_f$, then we have the relation 
\begin{equation}
    \label{ker3}
    -v^2F+(2uv+v^2)G+(uv-u^2)H=0,
\end{equation}
from which we see that $v$ divides $H$, so we may write $H=vH'$ for some form $H'$ of degree $4$. From substituting this expression back into (\ref{ker3}) and cancelling $v$, we see that $v$ divides $2G-uH'$, so that $G=\frac{vG'+uH'}{2}$ for some form $G'$ of degree 4. Thus we obtain the following kernel relations:
$$
\begin{pmatrix}u+\frac{v}{2} & \frac{3u}{2} \\
\frac{v}{2} & \frac{u}{2} \\
0 & v\end{pmatrix}
\begin{pmatrix}F' \\ G'\end{pmatrix}
=\begin{pmatrix}F \\ G \\ H\end{pmatrix}.
$$
So we see that $\mathcal{N}_{C_4/Q}\cong\O_{\P^1}(5)^{\oplus2}$.
\end{proof}

As a consequence of Theorem \ref{normal bundles}, we have the following classification of the exceptional divisor $E$ of $\pi\colon X\to Q$, which refines the statement of Lemma \ref{lemma:normal-bundle}:

\begin{corollary}
Let $\pi\colon X\to Q$ be a smooth Fano threefold in the family \textnumero2.21, and $E$ the $\pi$-exceptional divisor. Then for a general $X$, we have $E\cong\P^1\times\P^1$. Otherwise, $E\cong\F_2$.
\end{corollary}
\printbibliography
\end{document}